\providecommand{\U}[1]{\protect \rule{.1in}{.1in}}
\theoremstyle{change}
\newtheorem{definition}{Definition:}[section]
\newtheorem{proposition}[definition]{Proposition:}
\newtheorem{theorem}[definition]{Theorem:}
\newtheorem{lemma}[definition]{Lemma:}
\newtheorem{remark}[definition]{Remark:}
\newtheorem{example}[definition]{Example:}
\newenvironment{proof}
{{\bf Proof:}}
{\qquad \hspace*{\fill} $\Box$}
\newcommand{\fg}{\mathfrak{g}}
\newcommand{\id}{\operatorname{id}}
\newcommand{\inner}{\operatorname{int}}
\newcommand{\rme}{\mathrm{e}}
\newcommand{\CC}{\mathcal{C}}
\newcommand{\UC}{\mathcal{U}}
\newcommand{\AC}{\mathcal{A}}
\newcommand{\XC}{\mathcal{X}}
\newcommand{\DC}{\mathcal{D}}
\newcommand{\T}{\mathbb{T}}
\newcommand{\R}{\mathbb{R}}
\newcommand{\Z}{\mathbb{Z}}
\begin{document}

\title{On the structural properties of the bounded control set of a linear control system}
\author{V\'{\i}ctor Ayala \thanks{ Supported by Proyecto Fondecyt $n^{o}$ 1190142,
Conicyt, Chile}\\Universidad de Tarapac\'a\\Instituto de Alta Investigaci\'on\\Casilla 7D, Arica, Chile\\and\\Adriano Da Silva \thanks{ Supported by Fapesp grants $n^{o}$ 2020/12971-4 and 2018/13481-0, and partially by CNPq grant no. 309820/2019-7.}\\Instituto de Matem\'atica,\\Universidade Estadual de Campinas\\Cx. Postal 6065, 13.081-970 Campinas-SP, Brasil.}
\date{\today }
\maketitle

\begin{abstract}
The present paper shows that the closure of the bounded control set of a linear control system contains all the bounded orbits of the system. As a consequence, we prove that the closure of this control set is the continuous image of the cartesian product of the set of control functions by the central subgroup associated with the drift of the system.
\end{abstract}

\textbf{Key words:} Linear systems, control set, central subgroup, periodic orbits

	\textbf{2010 Mathematics Subject Classification: 93B99, 93C05}

\section{Introduction}

Let $\XC, Y^1, \ldots, Y^m$ be smooth vector fields on a connected finite dimensional differentiable manifold $M$. A control system $\Sigma_M$ on $M$
is determined by a family of controlled differential equations \
\begin{flalign*}
&&\dot{x}(t)=\XC(x(t))+\sum_{j=1}^mu_j(t)Y^j(x(t)),  &&\hspace{-1cm}\left(\Sigma_M\right)
\end{flalign*}
which allows changing the behavior of $\XC$ according to the control vectors
$Y^{1},Y^{j},...,Y^{m}$, and the set $\mathcal{U}$ of admissible control
functions
\[
\mathcal{U}=\left \{  u:\mathbb{R}\rightarrow \mathbb{R}^{m}%
\ :\ u\mbox{ is measurable with }u(t)\in \Omega \mbox{ a.e.}\right \}  ,
\]
where $\Omega \subset \mathbb{R}^{m}$ is a closed and convex set with
$0\in \operatorname{int}\Omega$.

The controllability notion of $\Sigma_M$ is one of the most relevant properties
of the system. It allows connecting any two points of $M$ through a
concatenation of integral curves of $\Sigma_M$, in positive time. For instance,
a necessary condition to solve any optimization problem between two states,
like a time optimal or minimum energy trajectory, is the existence of a
control $u\in \mathcal{U}$ such that the corresponding solution connects these
two states. Despite the existence of nice examples of controllable systems, 
this global property is hard to be satisfied in general. A well known example is
the class of linear control system on Euclidean spaces $\Sigma_{\mathbb{R}^{n}}$, where $\XC=A$ is a matrix of order $n$, and $Y^{j}=b^{j}\in\R^n$ are constant vector fields. In this context, the Kalman rank
condition characterizes controllability. However, to obtain that, you need to
consider $\Omega=\mathbb{R}^{m}$, which is far from real life. A more
realistic approach considers the case when $\Omega \subset \mathbb{R}^{m}$ is a
compact subset, and the notion of control sets for $\Sigma_M$, which are roughly speaking maximal subsets of $M$
where controllability holds. Recently, several papers are focused on studing the controllability
and the existence, uniqueness and topological properties of the control sets
of linear control systems on connected Lie groups. For this case, the drift $\mathcal{X}$ is a linear vector
field in the sense that its flow $\{ \varphi_{t}\}_{t\in \mathbb{R}}$ is a
$1$-parameter group of automorphisms of $G$, and the control vectors are elements
of the Lie algebra. It turns out that many properties of the system depend strongly of the
dynamical behavior of $\mathcal{X}.$ In fact, the flow of $\XC$ induce connected subgroups, called unstable, central and stable, which have a nice relationship with the set of the reachable points of the system and hence with its controllability and control sets (see \cite{DSAy0, DSAy, DSAyGZ, DS, JPh1, JPh2}). 

Our aim here is to study the structural properties of the control set of a linear control system that contains the identity of the group. If such control set is bounded and contains the identity element in its interior, our main result shows that all bounded orbits of $\Sigma_G$ are contained in its closure. As a consequence, such closure is the continuous image of the cartesian product of $\UC$ by the central subgroup.

The article is organized as follows: In Section 2 we introduce the decompositions induced by automorphisms and the notion of linear control systems and state the main results relating both notions. We finish the section with some new results needed in the prove of our main results. Section 3 is used to introduce and prove our main result. We prove here several properties concerning bounded orbits and the central subgroup of $\XC$ which implies the main result. Moreover, such properties also allows us to create a continuous function from the cartesian of $\UC$ by the central subgroup in the closure of the control set, showing that under our hypothesis the control set is the continuous image of this cartesian.

\subsection*{Notations}
In the present paper, all the Lie groups and algebras considered are real and finite dimensional. Let $G$ be a connected Lie group. A subgroup $H\subset G$ i said to be trivial if $H=\{e\}$, where $e\in G$ stands for the identity element of $G$. By $L_g$ and $R_g$ we denote, respectively, the left and right-translations by $g$. The conjugation of $g$ is the map $C_g:=L_g\circ R_{g^{-1}}$. The center $Z(G)$ of $G$ is the set of elements in $G$ that satisfy $C_g=\id_G$. If $f:G\rightarrow H$ is a differentiable map between Lie groups, the differential of $f$ at $x$ is denoted by $(df)_x$.

\section{Preliminaries}

This section is devoted to present the background needed in order to establish our main results. We introduce here the decompositions induced on Lie groups and algebras by their automorphisms and also the notion of LCS and their control sets. The main results needed are also stated here. In the end of the section we
prove some new results that will be necessary at Section 3. 

\subsection{Decomposition by automorphisms}

Let $\mathfrak{g}$ be a finite dimensional Lie algebra and $\rho \in \mathrm{Aut}(\mathfrak{g})$ an automorphism. For any eigenvalue $\alpha \in \mathbb{C}$ of $\rho$, the real generalized
eigenspaces of $\rho$ associated with $\alpha$ are given by
\[
\mathfrak{g}_{\alpha}=\{X\in \mathfrak{g}:(\rho-\alpha I)^{n}%
X=0\; \; \mbox{for some }n\geq1\}, \; \; \mbox{ if }\; \; \alpha \in \mathbb{R}
\; \; \mbox{ and}
\]
\[
\mathfrak{g}_{\alpha}=\mathrm{span}\{ \mathrm{Re}(v), \mathrm{Im}%
(v);\; \;v\in \bar{\mathfrak{g}}_{\alpha}\},\; \; \mbox{ if }\; \; \alpha
\in \mathbb{C}%
\]
where $\bar{\mathfrak{g}}=\mathfrak{g}+i\mathfrak{g}$ is the complexification
of $\mathfrak{g}$ and $\bar{\mathfrak{g}}_{\alpha}$ the generalized eigenspace
of $\bar{\rho}=\rho+i\rho$, the extension of $\rho$ to $\bar{\mathfrak{g}}$.

We define the {\bf unstable, central }and {\bf stable} $\rho$-invariant
subspaces of $\mathfrak{g}$, respectively, by
\[
\mathfrak{g}^{+}=\bigoplus_{\alpha:\, |\alpha|>1}\mathfrak{g}_{\alpha}%
,\hspace{1cm}\mathfrak{g}^{0}=\bigoplus_{\alpha:\, |\alpha|=1}\mathfrak{g}%
_{\alpha}\hspace{1cm}\mbox{ and }\hspace{1cm}\mathfrak{g}^{-}=\bigoplus
_{\alpha:\, |\alpha|<1}\mathfrak{g}_{\alpha}.
\]

Following \cite{DSAyHR} the fact that $[\bar{\mathfrak{g}}_{\alpha}%
,\bar{\mathfrak{g}}_{\beta}]\subset \bar{\mathfrak{g}}_{\alpha \beta}$ when
$\alpha \beta$ is an eigenvalue of $\rho$ and zero otherwise implies that
$\mathfrak{g}^{+},\mathfrak{g}^{0},\mathfrak{g}^{-}$ are in fact $\rho
$-invariant Lie subalgebras with $\mathfrak{g}^{+}$, $\mathfrak{g}^{-}$
nilpotent ones. Moreover, $\fg$ is decomposed as $\fg=\mathfrak{g}^{+}\oplus\mathfrak{g}^{0}\oplus\mathfrak{g}^{-}$.

At the group level, let $G$ be a connected Lie group with Lie algebra $\fg$. For any automorphism $\psi \in \mathrm{Aut}(G)$ the {\bf dynamical subgroups} of $G$ induced by $\psi$ are the Lie subgroups defined as follows: Since $(d\psi)_e$ is an automorphism of $\fg$ it induces subalgebras $\fg^+$, $\fg^-$ and $\fg^0$ of $\fg$ defined as previously. We can consider then connected subgroup $G^+, G^-$ and $G^0$ with the Lie algebras $\fg^+$, $\fg^-$ and $\fg^0$, respectively. As previously, the subgroups $G^+$, $G^0$ and $G^-$ are called the {\bf unstable, central} and {\bf stable} subgroups of $G$, respectively. 

Also, we denote by $G^{+, 0}$ and $G^{-, 0}$ the connected subgroups of $G$ with Lie algebras given by $\mathfrak{g}^{+, 0}:=\mathfrak{g}^{+}\oplus \mathfrak{g}^{0}$ and
$\mathfrak{g}^{-, 0}:=\mathfrak{g}^{-}\oplus \mathfrak{g}^{0}$, respectively, and by $G^{+, -}$ the product $G^{+, -}=G^+G^-$. 

We say that $G$ is {\bf decomposable} by $\varphi$ if
\[
G=G^{+, 0}G^{-}=G^{-, 0}G^{+}=G^{+, -}G^{0}.
\]
If $G^{0}=\{e\}$ we say that $\psi$ is {\bf hyperbolic}. In particular, if
$\psi$ is hyperbolic, $G=G^{+, -}:=G^{+}G^{-}$ is decomposable.

The next proposition summarizes the main properties of the previous subgroups. Its proof can be found in \cite[Proposition 3.4]{DSAyHR}.

\begin{proposition}
	\label{prop}
	For the dynamical subgroups of an automorphism $\psi$ of $G$, it holds:
	\begin{enumerate}
		\item If $G$ is solvable or if $G^0$ is a compact subgroup, then $G$ is decomposable; 
		\item If $\psi$ is hyperbolic, then $G$ is a nilpotent Lie group.
	\end{enumerate}

\end{proposition}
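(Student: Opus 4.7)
The plan is to prove each item at the Lie algebra level and lift to the group. Throughout I work with the direct sum decomposition $\fg = \fg^+ \oplus \fg^0 \oplus \fg^-$ into $(d\psi)_e$-invariant subalgebras, together with the grading relation $[\bar\fg_\alpha, \bar\fg_\beta] \subset \bar\fg_{\alpha\beta}$ stated earlier.

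For part (1), I need the three product equalities $G = G^{+, 0}G^- = G^{-, 0}G^+ = G^{+, -}G^0$. In the solvable case I would first pass to the simply connected cover $\tilde G$, where a classical fact says that whenever $\fg = \fh \oplus \fk$ is a vector-space direct sum with $\fh, \fk$ subalgebras, the multiplication map $\tilde H \times \tilde K \to \tilde G$ is a diffeomorphism via exponential coordinates of the second kind. Applying this to each of the three decompositions of $\fg$ yields the desired product formulas in $\tilde G$, which descend to $G$ because the connected subgroups $G^+, G^0, G^-$ are the images of $\tilde G^+, \tilde G^0, \tilde G^-$ under the covering map. In the case where $G^0$ is compact I would argue that any element $g \in G$ can, by lifting to $\tilde G$, be written as $g = g_1 g_2$ with $g_1$ in the appropriate factor and $g_2$ in the complementary subgroup modulo the discrete kernel of $\tilde G \to G$; compactness of $G^0$ is then used to absorb the ambiguity, since the kernel is discrete and central and any deviation can be realized inside the compact central factor.

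For part (2), I would invoke Jacobson's theorem that a finite-dimensional Lie algebra in characteristic zero admitting a nonsingular derivation is nilpotent. Define $D$ on the complexification $\bar\fg$ by $D|_{\bar\fg_\alpha} = \log|\alpha| \cdot I$ on each generalized eigenspace of $\bar\rho := \overline{(d\psi)_e}$. The grading relation together with the additivity $\log|\alpha\beta| = \log|\alpha|+\log|\beta|$ shows $D$ is a derivation, and since $\log|\bar\alpha|=\log|\alpha|$ the operator $D$ commutes with complex conjugation and therefore restricts to a derivation of $\fg$. Hyperbolicity is exactly the statement that $|\alpha| \neq 1$ for every eigenvalue $\alpha$, equivalently $\log|\alpha| \neq 0$, so $D$ is nonsingular. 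Jacobson's theorem then gives that $\fg$ is nilpotent, and since $G$ is connected with nilpotent Lie algebra, $G$ itself is nilpotent.

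The main obstacle I expect is the compact $G^0$ case in part (1): one has to track carefully how the discrete kernel of $\tilde G \to G$ meets each of the three dynamical subgroups, and the argument that compactness lets one absorb this kernel into $G^0$ requires care in comparing the lifted and projected decompositions. The solvable case and part (2) are, by contrast, fairly direct applications of standard structural results once the right derivation $D$ is exhibited.
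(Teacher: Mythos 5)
First, note that the paper does not prove this proposition: it is quoted verbatim from \cite[Proposition~3.4]{DSAyHR}, so your argument can only be measured against the published proof there, not against anything in this text.

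Your part (2) is correct and is essentially the standard argument: the grading relation $[\bar{\mathfrak{g}}_{\alpha},\bar{\mathfrak{g}}_{\beta}]\subset\bar{\mathfrak{g}}_{\alpha\beta}$ makes the operator acting as the scalar $\log|\alpha|$ on each $\bar{\mathfrak{g}}_{\alpha}$ a derivation; it commutes with complex conjugation because $|\bar{\alpha}|=|\alpha|$ and hence restricts to $\fg$; hyperbolicity makes it nonsingular; Jacobson's theorem gives nilpotency of $\fg$ and hence of the connected group $G$. The solvable half of part (1) is also essentially the route taken in the literature (see \cite{DS}): it rests on the fact that for a simply connected solvable group a vector-space splitting $\fg=\fh\oplus\fk$ into two subalgebras yields a global diffeomorphism $\tilde{H}\times\tilde{K}\rightarrow\tilde{G}$. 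That fact is true but is a theorem in its own right and should be cited or proved; it is not simply ``exponential coordinates of the second kind,'' which require a full flag of subalgebras rather than a splitting into two complementary ones. The descent to $G$ via the covering map is fine, since only surjectivity of the product is needed downstairs.

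The genuine gap is the compact-$G^{0}$ case of part (1). Your plan --- establish the product decomposition in $\tilde{G}$ and push it down, using compactness of $G^{0}$ to ``absorb'' the discrete kernel --- fails at both steps. First, when $G$ is not solvable there is no decomposition theorem available upstairs: the product statement for complementary subalgebras is special to simply connected \emph{solvable} groups, and a group with compact $G^{0}$ need not be solvable (it may contain a compact semisimple Levi factor, on which every automorphism preserves the negative-definite Killing form and so has all eigenvalues of modulus one; such a factor sits inside $G^{0}$). Second, the hypothesis does not survive the lift: the preimage of a compact $G^{0}$ in $\tilde{G}$ is typically noncompact (already for a torus), so compactness cannot be invoked in $\tilde{G}$, and in $G$ you have as yet nothing to descend. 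Finally, ``absorbing the ambiguity into the compact central factor'' is not an argument: the kernel of $\tilde{G}\rightarrow G$ is discrete and central, but there is no reason it meets $\tilde{G}^{0}$, let alone that deviations can be realized there. The proof in \cite{DSAyHR} handles this case by a different mechanism, in which compactness of $G^{0}$ enters through openness of the product of the dynamical subgroups near the identity together with a closedness argument and connectedness of $G$. As written, this half of part (1) is missing.
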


The previous decompositions for single automorphisms of $G$ can be extended to flows of automorphisms as follows: Let $\{ \varphi_{t}\}_{t\in \mathbb{R}}$ be a $1$-parameter flow of automorphisms on $G$. By derivation, $\{(d\varphi_t)_e, t\in\R\}$ is a $1$-parameter subgroup of $\mathrm{Aut}(\fg)$ and hence, there exists a derivation $\DC$ of $\mathfrak{g}$ such that $(d\varphi_t)_e=\rme^{t\DC}$ for any $t\in\R$. In particular, the subalgebras induced by $(d\varphi_t)_e$ coincide, for any $t\in\mathbb{R}$ with the sum of the generalized real eigenspaces of $\DC$ associated to eigenvalues with positive, zero and negative real parts. Therefore, we can define the dinamical subgroups of $\{ \varphi_{t}\}_{t\in \mathbb{R}}$ to be the dinamical subgroups of  $\varphi_{\tau}$ for some (and hence any) $\tau\in\R$. The properties of these subgroups where studied in previous works (\cite{DSAy, DS, DSAyGZ}) and their topological properties are much nicer than the ones associated with single automorphisms as the next result shows.

\begin{proposition}
	\label{propcont}
	Let $G^+, G^0$ and $G^-$ be the dynamical subgroups of the $1$-parameter flow of automorphisms $\{\varphi_{t}\}_{t\in \mathbb{R}}$. It holds:
	\begin{enumerate}
	\item $G^+, G^0$ and $G^-$ are closed and have trivial intersection;
	\item $G^+$ and $G^-$ are simply connected nilpotent Lie subgroups;
	\item Let $N$ be the nilradical of $G$ and assume that $G^0$ is a compact subgroup. Then
	\subitem 2.1. $G^{+, -}\subset N$;
	\subitem 2.2. $N^0=N\cap G^0$ is a compact, connected normal subgroup of $G$.
	
\end{enumerate}
	
\end{proposition}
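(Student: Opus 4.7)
The strategy is to reduce each statement to a Lie algebra fact about the generalized eigenspaces of the derivation $\DC$ with $(d\varphi_t)_e=\rme^{t\DC}$, and then pass to the group via the exponential map using the equivariance $\exp_G\circ\rme^{t\DC}=\varphi_t\circ\exp_G$.

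For item 1, I would first give a dynamical characterization: a point $g=\exp_G X$ lies in $G^+$ iff $(d\varphi_{-t})_e X\to 0$ at exponential rate as $t\to+\infty$, and symmetrically for $G^-$, while $G^0$ consists of points whose forward and backward orbits have subexponential growth in any norm pulled back through a faithful linear representation of the adjoint action. Continuity of $\{\varphi_t\}$ together with these characterizations shows that $G^+$, $G^0$ and $G^-$ are closed; the trivial intersection follows from the incompatibility of the three growth rates combined with the algebra-level direct sum $\fg=\fg^+\oplus\fg^0\oplus\fg^-$. For item 2, nilpotency of $\fg^+$ and $\fg^-$ was already recorded before Proposition \ref{prop}; since the exponential of a nilpotent Lie algebra is a global diffeomorphism onto its unique connected, simply connected integrating group, it suffices to check that $\exp_G$ restricted to $\fg^\pm$ is injective. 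This is achieved by iterating $\varphi_t$ (with $t\to-\infty$ on $\fg^+$ and $t\to+\infty$ on $\fg^-$): two distinct preimages of the same point would be pulled apart exponentially by the flow, contradicting their equality.

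For item 3, I assume $G^0$ is compact. Since $\fg^+\oplus\fg^-$ is a nilpotent $\DC$-invariant subalgebra of $\fg$, it lies in the nilradical $\fn$, whence $G^{+,-}\subset N$. For $N^0=N\cap G^0$, closedness and compactness are immediate from $G^0$ compact and $N$ closed; the Lie algebra of $N^0$ is $\fn\cap\fg^0$, and connectedness follows because the connected subgroup of $G^0$ integrating $\fn\cap\fg^0$ is a closed subgroup of the compact group $G^0$ and therefore agrees with the identity component of $N^0$. For normality of $N^0$ in $G$, I would argue that $\fn\cap\fg^0$ is an ideal of $\fg$: $\fn$ is already an ideal, and the $\fg^0$ component of any $\Ad(g)$-image must remain bounded under iteration by $(d\varphi_t)_e$, so by compactness of $G^0$ the component stays in $\fn\cap\fg^0$.

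The main obstacle is the normality of $N^0$ in $G$ in item 3: the $\DC$-decomposition is preserved by the flow but \emph{a priori} not by the full inner automorphism group, so one has to exploit the compactness of $G^0$ to force the central component of every $\Ad(g)$-image to remain in $\fn\cap\fg^0$. This likely requires combining the Levi decomposition with the fact that a compact subgroup of a simply connected nilpotent Lie group is trivial, reducing the argument first to $N$ and then extending it to all of $G$.
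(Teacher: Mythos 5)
The paper does not actually prove Proposition \ref{propcont}; it imports it from \cite{DSAy,DS,DSAyGZ}, so there is no in-paper argument to compare against and your sketch has to stand on its own. As it stands it has genuine gaps. The most serious one is in item 1: closedness of $G^{\pm}$ does \emph{not} follow from a dynamical characterization plus continuity, because the stable set of a fixed point need not be closed. The paper's own Example \ref{toro} is exactly a counterexample to your reasoning: for the cat map on $\T^{2}$ the set of points whose backward orbit converges to $e$ \emph{is} the dense winding line $G^{+}$, and $\exp|_{\fg^{+}}$ is even injective there, yet $G^{+}$ is not closed. So any correct proof must use the flow structure (continuity in $t$) in an essential way --- for instance, that a continuous one-parameter group of automorphisms must act trivially on the maximal torus of $\overline{G^{+}}$, forcing that torus into $G^{0}$ and hence to be trivial --- and your sketch never isolates where this happens. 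Your contraction argument for injectivity of $\exp|_{\fg^{\pm}}$ is fine (push two preimages into a neighborhood where $\exp$ is injective), but it only yields simple connectedness of the immersed subgroup, not closedness.

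In item 3 there are two further problems. First, $\fg^{+}\oplus\fg^{-}$ is in general \emph{not} a subalgebra: $[\fg_{\alpha},\fg_{\beta}]\subset\fg_{\alpha+\beta}$ with $\mathrm{Re}(\alpha)>0>\mathrm{Re}(\beta)$ can land in $\fg^{0}$. This is precisely why the paper defines $G^{+,-}$ as the product $G^{+}G^{-}$ and must separately assume it is a subgroup in Proposition \ref{x(u)}. Second, even a nilpotent $\DC$-invariant subalgebra need not lie in the nilradical (a Cartan subalgebra of a semisimple algebra is nilpotent while the nilradical is zero); elements of $\fg^{\pm}$ are $\ad$-nilpotent, but placing them inside $\fn$ genuinely requires the compactness of $G^{0}$, typically via the Levi decomposition and the fact that the induced derivation on the semisimple quotient is inner. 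Finally, you correctly identify the normality of $N^{0}$ in $G$ as the remaining obstacle, but you only offer a heuristic for it; the boundedness argument you gesture at needs to be turned into a proof that $\fn\cap\fg^{0}$ is an ideal of all of $\fg$, not just $\DC$-invariant. Since the paper itself defers to \cite[Proposition 3.4]{DSAyHR} and the cited works of Da Silva for these facts, the honest options are either to cite them as the paper does or to supply the missing structural arguments above.
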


The next example shows that the dynamical subgroups associated with single automorphisms does not need to be closed.

\begin{example}
	\label{toro}
	Let us consider $\mathbb{T}^2=\R^2/\Z^2$ the $2$-dimensional torus. As a general result, the group of automorphisms $\mathrm{Aut}(\T^2)$ is given by 
	$$\mathrm{Aut}(\T^2)=\{A\in\mathrm{Gl}(\R^2); \;\; A\Z^2=\Z^2\}.$$
	In particular, 
	$$\psi=\left(\begin{array}{cc}
	1 & 1 \\ 2 & 1
	\end{array}\right)\in\mathrm{Aut}(\T^2), \;\;\;\mbox{ with }\;\;\;\fg^+=\R(1, \sqrt{2})\;\;\;\mbox{ and }\;\;\;\fg^-=\R(1, -\sqrt{2}).$$
	Moreover, both $G^+$ and $G^-$ are images of the well known irrational flows on $\T^2$, and are henceforth not closed in $\T^2$. 
\end{example}

\subsection{Linear control systems}

Let $G$ be a connected Lie group with Lie algebra $\mathfrak{g}$ identified
with the set right-invariant vector fields of $G$. A {\bf linear control system (LCS)} on $G$ is given by
a family of ordinary differential equations
\begin{flalign*}
&&\dot{g}(t)=\XC(g(t))+\sum_{j=1}^mu_j(t)Y^j(g(t)),  &&\hspace{-1cm}\left(\Sigma_G\right)
\end{flalign*}
where the {\bf drift} $\mathcal{X}$ is a linear vector field, that is, its
associated flow $\{ \varphi_{t}\}_{t\in \mathbb{R}}$ is a $1$-parameter group of
automorphisms, $Y^{1}, \ldots, Y^{m}\in \mathfrak{g}$ and $u=(u_{1}, \ldots,
u_{m})\in \mathcal{U}$. The set $\mathcal{U}$ of admissible {\bf control functions}
is given by%
\[
\mathcal{U} = \left \{ u:\mathbb{R}\rightarrow \mathbb{R}^{m}\ :\ u
\mbox{ is measurable with } u(t) \in \Omega \mbox{ a.e.}\right \} ,
\]
where $\Omega$ is a compact and convex subset of $\mathbb{R}^{m}$ with $0
\in \operatorname{int} \Omega$. Endowed with the
weak$^{*}$-topology of $L^{\infty}(\mathbb{R},\mathbb{R}^{m}) = L^{1}%
(\mathbb{R},\mathbb{R}^{m})^{*}$ the set $\mathcal{U}$ is a compact metrizable space and the {\bf shift
flow}
\[
\theta:\mathbb{R} \times \mathcal{U} \rightarrow \mathcal{U},\quad(t,u)
\mapsto \theta_{t}u = u(\cdot+ t),
\]
is a continuous dynamical system (see \cite[Section
4.2]{FCWK}). By $\mathcal{U}_{\mathrm{per}}$ we denote the set of
periodic points of $\theta$ in $\mathcal{U}$, that is, $u\in \mathcal{U}%
_{\mathrm{per}}$ if there exists $\tau>0$ such that $\theta_{\tau}u=u$. By
\cite[Lemma 4.2.2]{FCWK} it holds that $\mathcal{U}_{\mathrm{per}}$ is dense in
$\mathcal{U}$.

For any $g\in G$ and $u\in \mathcal{U}$, the solution $t\mapsto \phi(t, g, u)$
of $\Sigma_{G}$ is defined for the whole real line and satisfies the {\bf cocycle property}
\[
\phi(t+s, g, u)=\phi(t,\phi(s, g, u),\theta_{s}u)
\]
for all $t,s\in \mathbb{R}$, $g\in G$, $u\in \mathcal{U}.$ The {\bf control flow} of the system is the skew-product flow%
\[
\Phi:\mathbb{R} \times \mathcal{U} \times G \rightarrow \mathcal{U} \times
G,\quad(t,u,x) \mapsto \Phi_{t}(u,x) = (\theta_{t}u,\phi(t,x,u)).
\]
We also write $\phi_{t,u}:G\rightarrow G$ for the map $g\mapsto \phi(t, g, u)$.
In the particular case of LCS, the intrinsic relations between the
vector fields involved and the group structure implies the following relation
\begin{equation}
\label{solucaolinear}\phi_{\tau, u}\circ R_{g}=R_{\varphi_{\tau}(g)}\circ
\phi_{\tau, u}, \; \; \mbox{ for any }\; \; \tau \in \mathbb{R}, g\in G,
\end{equation}
where $\{ \varphi_{t}\}_{t\in \mathbb{R}}$ is the flow of $\mathcal{X}$. We
define the set of points reachable from $g\in G$ at time $\tau>0$ and
the {\bf reachable set of $g$}, respectively, by%
\[
\mathcal{A}_{\tau}(g):=\{ \phi_{\tau, u}(g), \; \; \; u\in \mathcal{U}\}
\; \; \; \mbox{ and }\; \; \; \mathcal{A}(g):=\bigcup_{\tau>0}\mathcal{A}_{\tau
}(g).
\]
We denote by $\mathcal{A}_{\tau}^{*}(g)$ and $\mathcal{A}^{*}(g)$ the respective reachable sets in negative time.

Next we define control sets for a LCS.

\begin{definition}
	\label{controlset}  A nonempty subset $\mathcal{C}\subset G$ is said to be a
	{\bf control set } of $\Sigma_{G}$ if it is maximal (w.r.t. set inclusion)
	satisfying 
	
	\begin{itemize}	
		\item[(i)] For each $g\in \mathcal{C}$ there exists $u\in \mathcal{U}$ such that
		$\phi(\mathbb{R}^{+}, g, u)\subset \mathcal{C}$; 
		
		\item[(ii)] For any $g\in \mathcal{C}$ it holds that $\mathcal{C}%
		\subset \overline{\mathcal{A}(g)}$. 
	\end{itemize}
\end{definition}

Since the identity $e\in G$ is a singularity of $\mathcal{X}$ and $0\in \operatorname{int}\Omega$, there exists a control set $\mathcal{C}$ of
$\Sigma_{G}$ containing the identity. Moreover, $e\in \operatorname{int}\mathcal{C}$ if and only if $\AC(e)$ (or equivalently $\AC^*(e)$) is open. In this case, it holds that 
$$\CC=\overline{\AC(e)}\cap\AC^*(e).$$

The next result relates the dinamics of a LCS with the dinamical subgroups associated with the flow of $\mathcal{X}$ (see \cite[Lemma 3.1]{DS} and \cite[Theorem 3.8]{DSAy}).

\begin{theorem}
	\label{properties}
	For a LCS $\Sigma_G$, it holds:
\begin{itemize}

\item[1.] If $\varphi_{t}(g)\in\mathcal{A}(e)$ for all $t\in \mathbb{R}$ then $\mathcal{A}(e)g\subset
\mathcal{A}(e)$; 

\item[2.] If $\varphi_{t}(g)\in\mathcal{A}^*(e)$ for all $t\in \mathbb{R}$ then $\mathcal{A}^*(e)g\subset
\mathcal{A}^*(e)$; 

\item[3.] If $\mathcal{A}(e)$ is open, then 
\subitem 3.1. $G^{+, 0}\subset \mathcal{A}(e)$ and $G^{-, 0}\subset \mathcal{A}^{*}(e)$;
\subitem 3.2. If $G$ is decomposable, $\CC$ is the only control set with nonempty interior of $\Sigma_G$;
\subitem 3.3. $G^0$ is a compact subgroup if and only if $\CC$ is bounded.
\end{itemize}
\end{theorem}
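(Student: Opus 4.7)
The plan is to treat each of the four assertions in sequence. Items 1 and 2 reduce to a concatenation argument built on the intertwining relation (\ref{solucaolinear}) and the cocycle property, and items 3.1--3.3 then follow by exploiting the dynamical decomposition together with items 1 and 2.

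For item 1, fix $h \in \AC(e)$ and write $h = \phi_{\tau, u}(e)$ for some $\tau > 0$ and $u \in \UC$. Applying the hypothesis at $t = -\tau$ gives $\varphi_{-\tau}(g) \in \AC(e)$, so there exist $s > 0$ and $v \in \UC$ with $\phi_{s, v}(e) = \varphi_{-\tau}(g)$. Concatenating $v$ with $u$ at time $s$ and combining the cocycle identity with (\ref{solucaolinear}) yields
\[
\phi_{\tau, u}\bigl(\phi_{s, v}(e)\bigr) \;=\; \phi_{\tau, u}\bigl(R_{\varphi_{-\tau}(g)}(e)\bigr) \;=\; R_g\bigl(\phi_{\tau, u}(e)\bigr) \;=\; hg,
\]
so $hg \in \AC_{s+\tau}(e) \subset \AC(e)$. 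Item 2 follows by the dual argument applied to the time-reversed system.

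For item 3.1, openness of $\AC(e)$ provides a neighborhood $V$ of $e$ contained in $\AC(e)$, and since $\phi_{t, 0} = \varphi_t$ the cocycle property implies $\varphi_t(\AC(e)) \subset \AC(e)$ for every $t \geq 0$. If $g \in G^+$, then $\varphi_{-T}(g) \to e$ as $T \to \infty$, so $\varphi_{-T}(g) \in V$ for $T$ large and hence $g = \varphi_T(\varphi_{-T}(g)) \in \AC(e)$. If $g \in G^0$, the orbit $\{\varphi_t(g) : t \in \R\}$ is precompact because the derivation associated with $\XC$ has purely imaginary eigenvalues on $\fg^0$, and a recurrence argument on the compact orbit closure, combined with item 1, is used to conclude $\varphi_t(g) \in \AC(e)$ for every $t$. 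The inclusion $G^{-, 0} \subset \AC^*(e)$ is dual. I expect the central case to be the main technical obstacle: unlike the unstable one, the orbit does not approach $e$, and the conclusion has to be extracted from the almost-periodic behaviour of $\varphi_t$ on $G^0$ via the semigroup-type property furnished by items 1 and 2.

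Items 3.2 and 3.3 then follow. For 3.2, let $\CC'$ be any control set of $\Sigma_G$ with nonempty interior and choose $h_0 \in \inner \CC'$; using decomposability together with $G^{+, 0} \subset \AC(e)$, $G^{-, 0} \subset \AC^*(e)$ and items 1--2, one shows that $h_0$ is both forward- and backward-reachable from $e$, whence $\CC' \cap \CC \neq \emptyset$ and maximality forces $\CC' = \CC$. For 3.3, if $G^0$ is compact the forward/backward $\varphi$-invariance developed above confines the $G^+$- and $G^-$-components of any point of $\CC = \overline{\AC(e)} \cap \AC^*(e)$ to bounded sets, while the $G^0$-component lies in the compact subgroup $G^0$; conversely, non-compactness of $G^0$ places a non-compact set inside $\overline{\CC}$ by item 3.1, forcing $\CC$ to be unbounded.
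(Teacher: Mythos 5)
First, note that the paper does not prove this theorem at all: it is quoted from the literature (the text points to \cite[Lemma 3.1]{DS} and \cite[Theorem 3.8]{DSAy}), so there is no in-paper argument to compare against. Judged on its own terms, your treatment of items 1 and 2 is correct and complete: the computation $\phi_{\tau,u}(\varphi_{-\tau}(g))=\phi_{\tau,u}(e)\varphi_{\tau}(\varphi_{-\tau}(g))=hg$ together with concatenation of controls is exactly the right mechanism, and the $G^{+}$ half of 3.1 (push $\varphi_{-T}(g)$ into a neighborhood $V\subset\AC(e)$ of $e$ and flow forward) is also fine.

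The genuine gap is in the central part of 3.1, and it propagates into 3.2 and 3.3. Your claim that for $g\in G^{0}$ the orbit $\{\varphi_{t}(g):t\in\R\}$ is precompact is false: the eigenvalues of the derivation on $\fg^{0}$ have zero real part, but nothing forces them to be semisimple, so $\rme^{t\DC}|_{\fg^{0}}$ can grow polynomially (take $G=\R^{2}$ and $\DC$ nilpotent nonzero; then $\fg^{0}=\fg$ and $\varphi_{t}(0,1)=(t,1)$ is unbounded). The theorem is stated without the compactness hypothesis on $G^{0}$ that the paper only imposes from Section 3 onward, so you cannot appeal to it here. Moreover, even where precompactness holds, the "recurrence argument" you defer to does not close the case: recurrence $\varphi_{t_{n}}(g)\to g$ with $t_{n}\to+\infty$ only yields $\varphi_{-t}(g)\in\overline{\AC(e)}$, and upgrading this to membership in the open set $\AC(e)$ --- and then passing from a neighborhood of $e$ in $G^{0}$ to all of $G^{0}$, which requires observing that $\{g\in G^{0}:\varphi_{t}(g)\in\AC(e)\ \forall t\in\R\}$ is an open subsemigroup of the connected group $G^{0}$ containing $e$, hence clopen --- is precisely the content that is missing. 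You flag the central case as "the main technical obstacle" yourself, which is an accurate self-assessment: as written, 3.1 is not proved, and since your sketches of 3.2 (uniqueness of $\CC$) and of the forward implication in 3.3 (boundedness of $\CC$ when $G^{0}$ is compact) both lean on $G^{0}\subset\AC(e)\cap\AC^{*}(e)$ and on unstated boundedness estimates for the $G^{\pm}$-components, those items remain unproved as well. Only the converse direction of 3.3 ($G^{0}$ noncompact and closed forces $\CC\supset G^{0}$ unbounded) is essentially complete given 3.1.
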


\subsection{Some results}

In this section, we obtain some results related to the dynamical subgroups of automorphisms that will be necessary in the proof of the main result.

\begin{proposition}
	\label{difeo}  If $\psi \in \mathrm{Aut}(G)$ is hyperbolic, the map
	\[
	f_{\psi}: \; \; \;g\in G\mapsto g\psi(g^{-1})\in G
	\]
	is an onto local diffeomorphism. Furthermore, $f_{\psi}$ is injective if and only
	if the only fixed point of $\psi$ is the identity element of $G$.
\end{proposition}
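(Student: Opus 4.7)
The plan is to establish, in turn, the local-diffeomorphism property, surjectivity, and the injectivity criterion.

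For the local-diffeomorphism part, I would compute $(df_\psi)_g$ everywhere and show it is an isomorphism. Using the identity $f_\psi(gh) = g\,f_\psi(h)\,\psi(g^{-1})$, which follows from $(gh)^{-1} = h^{-1}g^{-1}$ together with $\psi$ being a homomorphism, differentiation at $h = e$ reduces the computation of $(df_\psi)_g$ to that of $(df_\psi)_e$, up to pre- and post-composition with translations in $G$. At the identity, a one-parameter subgroup computation yields $f_\psi(\exp(tX)) = \exp(tX)\exp(-t\,(d\psi)_e X)$, whence $(df_\psi)_e = I - (d\psi)_e$. Hyperbolicity of $\psi$ ensures that no eigenvalue of $(d\psi)_e$ has modulus $1$; in particular $1$ is not an eigenvalue, so $I - (d\psi)_e$ is invertible. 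The inverse function theorem then gives that $f_\psi$ is a local diffeomorphism.

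For surjectivity, I would invoke Proposition \ref{prop}(2) to conclude that $G$ is nilpotent, and then induct on its nilpotency class $k$. For $k = 1$, $G$ is a connected abelian group, hence isomorphic to $\fg/\Lambda$ for a discrete subgroup $\Lambda$; in these coordinates $f_\psi$ reads $X + \Lambda \mapsto (I - (d\psi)_e)X + \Lambda$, which is surjective because $I - (d\psi)_e$ is a linear bijection of $\fg$. For $k \geq 2$, let $Z = G^{(k-1)}$ be the last nontrivial term of the lower central series of $G$: it is connected, abelian, central in $G$, and characteristic hence $\psi$-invariant. Both $\psi|_Z$ and the induced automorphism $\bar\psi$ on $G/Z$ (of class $k-1$) are hyperbolic, so by the inductive hypothesis $f_{\psi|_Z}$ and $f_{\bar\psi}$ are onto. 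Given $h \in G$, use surjectivity on $G/Z$ to produce $g_0 \in G$ and $z_0 \in Z$ with $f_\psi(g_0) = h z_0$. Since $Z$ is central, for any $z' \in Z$ one checks $f_\psi(g_0 z') = f_\psi(g_0)\,f_\psi(z')$; then picking $z' \in Z$ with $f_\psi(z') = z_0^{-1}$ yields $f_\psi(g_0 z') = h$.

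For the injectivity criterion, the equation $f_\psi(g) = f_\psi(g')$ can be rewritten, on left-multiplying by $(g')^{-1}$ and right-multiplying by $\psi(g)$, as $g'^{-1} g = \psi(g'^{-1} g)$; that is, $g'^{-1} g \in \fix(\psi)$. Hence $f_\psi$ is injective if and only if $\fix(\psi) = \{e\}$. I expect the main obstacle to lie in the inductive step for surjectivity: the key identity $f_\psi(g_0 z') = f_\psi(g_0)\,f_\psi(z')$ requires both $z'$ and $\psi(z'^{-1})$ to commute with $\psi(g_0^{-1})$, which is why $Z$ must be chosen to be central, not merely a $\psi$-invariant normal subgroup.
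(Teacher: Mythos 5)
Your overall strategy coincides with the paper's: the identity $f_\psi(gh)=g\,f_\psi(h)\,\psi(g^{-1})$ together with the computation $(df_\psi)_e=I-(d\psi)_e$ and the absence of the eigenvalue $1$ gives the local diffeomorphism; surjectivity is obtained by induction after splitting off a central subgroup and using that $f_\psi$ is multiplicative over central elements; and the injectivity criterion is the same two-line manipulation. The only substantive difference is how the induction is organized: you induct on the nilpotency class and quotient by the last nontrivial term $Z=G^{(k-1)}$ of the lower central series, while the paper inducts on $\dim G$ and quotients by $Z(G)_0$, the identity component of the center. Your base case (writing a connected abelian $G$ as $\fg/\Lambda$ and observing that $f_\psi$ is induced by the linear bijection $I-(d\psi)_e$) is, if anything, more explicit than the paper's argument that the image is an open subgroup of a connected group.

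There is, however, one genuine gap in your inductive step: you need $Z=G^{(k-1)}$ to be a \emph{closed} subgroup for $G/Z$ to be a Lie group carrying the induced automorphism $\bar\psi$, and this is not automatic. For a connected nilpotent Lie group the terms of the lower central series are connected immersed subgroups but can fail to be closed: already $[G,G]$ can be a dense one-parameter subgroup of a torus inside $G$ (quotient the product of the three-dimensional Heisenberg group with $\R$ by a suitable lattice in its two-dimensional center). You would either have to show that the existence of a hyperbolic automorphism rules this out, or replace $Z$ by its closure (still connected, central and $\psi$-invariant, and the quotient still has strictly smaller class), or simply do what the paper does and use $Z(G)_0$, which is automatically closed because $Z(G)=\ker(\Ad)$ is closed in a connected group. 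Everything else in your argument --- the hyperbolicity of $\psi|_Z$ and of $\bar\psi$, the identity $f_\psi(g_0z')=f_\psi(g_0)f_\psi(z')$ for central $z'$, and the reduction $f_\psi(g)=f_\psi(g')\iff g'^{-1}g\in\fix(\psi)$ --- is correct.
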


\begin{proof}
	The differentiability of $f_{\psi}$ follows directly from its definition.
	Moreover, for any $G$ it holds that
	\begin{equation}
	\label{1}f_{\psi}(hg)=hg\psi((hg)^{-1})=h\left( g\psi(g^{-1})\right)
	\psi(h^{-1})=hf_{\psi}(g)\psi(h^{-1})\implies f_{\psi}\circ L_{h}=L_{h}\circ
	R_{\psi(h^{-1})}\circ f_{\psi},
	\end{equation}
	and since left and right translations are diffeomorphims we get in particular
	that $f_{\psi}$ has constant rank. On the other hand,
	\[
	(df_{\psi})_{e}X=\frac{d}{dt}_{|t=0}\mathrm{e}^{tX}\psi \left( \mathrm{e}%
	^{-tX}\right) =\left( (dL_{\mathrm{e}^{tX}})_{\psi(\mathrm{e}^{-tX})}\frac
	{d}{dt}\psi(\mathrm{e}^{-tX})+(dR_{\psi(\mathrm{e}^{-tX})})_{\mathrm{e}^{tX}%
	}\frac{d}{dt}\mathrm{e}^{tX}\right) _{|t=0}=X-(d\psi)_{e}X
	\]
	and since $\psi$ is hyperbolic, we have that $(d\psi)_{e}$ does not have $1$ as an eigenvalue. Therefore, $(df_{\psi
	})_{e}(\mathfrak{g})=\mathfrak{g}$ and hence $f_{\psi}$ is a local diffeomorphism.
	
	The surjectiveness of $f_{\psi}$ is proved by induction on the dimension of $G$. If $G$ is an abelian group, for any $g, h\in G$, we get from equation
	(\ref{1}) that
	\[
	f_{\psi}(gh)=f_{\psi}(g)f_{\psi}(h)
	\]
	showing that $f_{\psi}$ is an homomorphism. Being $f_{\psi}$ a local
	diffeomorphism we must have that $f_{\psi}(G)$ is a subgroup with nonempty
	interior in $G$ and hence $f_{\psi}(G)=G$. In particular, the result is true
	when $\dim G=1$.
	
	Assume now that the result is true for any Lie group $G$ with $\dim G<n$ and
	consider $G$ to be a nonabelian Lie group of dimension $n$ and $\psi
	\in \mathrm{Aut}(G)$ to be hyperbolic. Since $G$ is a nilpotent Lie group, the connected component $Z(G)_0$ of its center is a closed, connected, nontrivial, normal subgroup of $G$ and so $\hat{G}=G/Z(G)_0$ is a connected Lie group satisfying
	$\dim \hat{G}=\dim G-\dim Z(G)_0<n$. Let then $\hat{\psi}\in \mathrm{Aut}(\hat
	{G})$ such that
	\[
	\hat{\psi}\circ \pi=\pi \circ \psi, \; \; \mbox{ where }\; \; \pi:G\rightarrow \hat
	{G}\; \mbox{ is the canonical projection}.
	\]
	A simple calculation shows that the associated map
	\[
	f_{\hat{\psi}}:\hat{g}\in \hat{G}\mapsto \hat{g}\hat{\psi}(\hat{g}^{-1})\in
	\hat{G}%
	\]
	also verifies $f_{\hat{\psi}}\circ \pi=\pi \circ f_{\psi}$. Since $\pi(G^{+, -})=\hat{G}^{+, -}$ and $\pi(G^{0})=\hat{G}^{0}$ it holds that $\hat{\psi}$ is hyperbolic and by the
	inductive hypothesis
	\begin{equation}
	\label{2}\pi(f_{\psi}(G))= f_{\hat{\psi}}(\pi(G))=f_{\hat{\psi}}(\hat{G}%
	)=\hat{G}=\pi(G).
	\end{equation}
	On the other hand, since $\dim Z(G)_{0}<n$ and $\psi|_{Z(G)_{0}}%
	\in \mathrm{Aut}(Z(G)_{0})$ is hyperbolic,  by the abelian case we get
	\[
	f_{\psi|_{Z(G)_{0}}}=f_{\psi}|_{Z(G)_{0}}\; \; \mbox{ is surjective}.
	\]
	Therefore, for any $g\in G$ there are by (\ref{2}) elements $x\in G$ and
	$h\in Z(G)_{0}$ such that $g=f_{\psi}(x)h.$ Since $f_{\psi}|_{Z(G)_{0}}$ is
	surjective it holds that $h=f_{\psi}(y)$ for some $y\in Z(G)_{0}$ and so
	\[
	g=f_{\psi}(x)h=f_{\psi}(x)f_{\psi}(y)=f_{\psi}(xy),
	\]
	where for the last equality we used (\ref{1}) and the fact that $y\in
	Z(G)_{0}$. Therefore, $f_{\psi}$ is surjective as stated.
	
	For the last assertion, note that
	\[
	f_{\psi}(g)=f_{\psi}(h)\iff g\psi(g^{-1})=h\psi(h^{-1})\iff \psi(g^{-1}%
	h)=g^{-1}h
	\]
	and consequently $f_{\psi}$ is injective if and only $e\in G$ is only fixed point of $\psi$ as stated.
\end{proof}

\bigskip

The next result shows that for the dynamical subgroups associated with a flow of automorphisms, there is a diffeomorphism between the sum of the stable and unstable subalgebras and their respective subgroups that conjugates the flow and its differential.

\begin{proposition}
	\label{map}  For any $1$-parameter flow of autormorphisms $\{ \varphi_{t}\}_{t\in \mathbb{R}}$, the map
	$$f:\fg^+\oplus\fg^-\rightarrow G^{+, -}, \;\;\;\; X+Y\mapsto \mathrm{e}^{X}\mathrm{e}^{Y},$$
	is a diffeomorphism and 
	\begin{equation}
	\forall t\in \mathbb{R}, \; \; \; f\circ(d\varphi_{t})_{e}=\varphi_{t}\circ f.
	\end{equation}
	
\end{proposition}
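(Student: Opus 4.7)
The plan is to factor $f$ through the multiplication map $\mu:G^+\times G^-\to G^{+,-}$, $(g,h)\mapsto gh$. By Proposition \ref{propcont}, both $G^+$ and $G^-$ are simply connected nilpotent Lie subgroups, so their exponential maps $\exp_{G^\pm}:\fg^\pm\to G^\pm$ are global diffeomorphisms. Under the natural identification $\fg^+\oplus\fg^-\cong\fg^+\times\fg^-$ one has $f=\mu\circ(\exp_{G^+}\times\exp_{G^-})$, so it suffices to show that $\mu$ is a diffeomorphism.

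For injectivity of $\mu$, suppose $g_1h_1=g_2h_2$ with $g_i\in G^+$ and $h_i\in G^-$. Then $g_2^{-1}g_1=h_2h_1^{-1}\in G^+\cap G^-=\{e\}$ by Proposition \ref{propcont}, so $g_1=g_2$ and $h_1=h_2$. Surjectivity onto $G^{+,-}$ is by definition. For the differential, at the identity a direct computation gives $(d\mu)_{(e,e)}(X,Y)=X+Y$, which is injective on $\fg^+\oplus\fg^-$ since $\fg^+\cap\fg^-=\{0\}$. To promote this to every point I would exploit the equivariance
\[
\mu(g_0 g,\,h h_0)=g_0\,\mu(g,h)\,h_0=(L_{g_0}\circ R_{h_0}\circ\mu)(g,h),
\]
which realises $\mu$, near $(g_0,h_0)$, as the composition of $\mu$ near $(e,e)$ with the diffeomorphisms $(g,h)\mapsto(g_0 g,h h_0)$ of $G^+\times G^-$ and $L_{g_0}\circ R_{h_0}$ of $G$. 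Hence the rank of $d\mu$ is constant, so $\mu$ is an injective immersion whose image is $G^{+,-}$; equipping $G^{+,-}$ with the manifold structure transported through $\mu^{-1}$ makes $\mu$ (and therefore $f$) a diffeomorphism.

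The intertwining identity is then a direct check using that each $\varphi_t$ is an automorphism preserving $G^+$ and $G^-$, and that $(d\varphi_t)_e$ preserves the decomposition $\fg^+\oplus\fg^-$:
\[
\varphi_t\bigl(f(X+Y)\bigr)=\varphi_t(\rme^X)\varphi_t(\rme^Y)=\rme^{(d\varphi_t)_e X}\rme^{(d\varphi_t)_e Y}=f\bigl((d\varphi_t)_e(X+Y)\bigr).
\]
The main subtlety is the constant-rank step together with the fact that, when $\fg^0\neq 0$, $G^{+,-}$ need not be an embedded submanifold of $G$; this is circumvented by working on the abstract product $G^+\times G^-$ and carrying its smooth structure across $\mu$, rather than trying to endow $G^{+,-}$ with the subspace topology inherited from $G$.
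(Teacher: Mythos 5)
Your proof is correct and follows essentially the same route as the paper: both arguments rest on the simple connectedness of $G^{\pm}$ (so that $\exp|_{\fg^{\pm}}$ are diffeomorphisms), obtain injectivity from $G^+\cap G^-=\{e\}$, and establish the immersion property by reducing the differential to the identity via left/right translations, with the intertwining identity checked by the same one-line computation. The only difference is presentational: you factor $f$ through the multiplication map on $G^+\times G^-$ and invoke equivariance, whereas the paper computes $(df)_{X+Y}$ directly with explicit curves; your closing remark about transporting the smooth structure onto $G^{+,-}$ is a point the paper leaves implicit.
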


\begin{proof}
	The differentiability of $f$ comes direct from its definition. Moreover, by Proposition \ref{propcont}, the Lie subgroups $G^+$ and $G^-$ associated with $\{\varphi_t\}_{t\in\R}$ are simply connected and hence the exponential map restricted to both $\mathfrak{g}^{+}$ and $\mathfrak{g}^{-}$ are
	diffemorphisms. Let $X\in \mathfrak{g}^{+}$ and define the
	isomorphism
	\[
	T^{+}_{X}:\mathfrak{g}^{+}\rightarrow \mathfrak{g}^{+}, \; \; \;T^{+}%
	_{X}:=(dL_{\mathrm{e}^{-X}})_{\mathrm{e}^{X}}\circ(d\exp|_{\mathfrak{g}^{+}%
	})_{X}.
	\]
	For $Z\in \mathfrak{g}^{+}$, the curve
	\[
	\gamma:\R\rightarrow \mathfrak{g}^{+}, \; \; \; \; \gamma(s):=\exp
	|_{\fg^+}^{-1}\left( \mathrm{e}^{X}\mathrm{e}^{sT^{+}_{X}Z}\right) ,
	\]
	is well defined, differentiable and satisfies $\gamma(0)=X$. Moreover, 
	\[
	(d\exp|_{\mathfrak{g}^{+}})_{X}\gamma^{\prime}(0)=\frac{d}{ds}_{|s=0}%
	\mathrm{e}^{\gamma(s)}=\frac{d}{ds}_{|s=0}\mathrm{e}^{X}\mathrm{e}^{sT_{X}%
		Z}=(dL_{\mathrm{e}^{X}})_{e}T^{+}_{X}Z,
	\]
	and hence $\gamma^{\prime}(0)=Z$. Analogously, for $Y, W\in \mathfrak{g}^{-}$ we construct a curve $\beta:\R\rightarrow \mathfrak{g}^{-}$ satisfying
	\[
	\mathrm{e}^{\beta(s)}=\mathrm{e}^{sT^{-}_{Y}W}\mathrm{e}^{Y}, \; \; \; \beta
	(0)=Y\; \; \; \mbox{ and }\; \; \; \beta^{\prime}(0)=W.
	\]
	Therefore,
	\[
	(df)_{X+Y}(Z+W)=\frac{d}{ds}_{|s=0}f(\gamma(s)+\beta(s))=\frac{d}{ds}%
	_{|s=0}\mathrm{e}^{\gamma(s)}\mathrm{e}^{\beta(s)}%
	\]
	\[
	=\frac{d}{ds}_{|s=0}\mathrm{e}^{X}\mathrm{e}^{sT^{+}_{X}Z}\mathrm{e}%
	^{sT^{-}_{Y}W}\mathrm{e}^{Y}=\left( d(L_{\mathrm{e}^{X}}\circ R_{\mathrm{e}%
		^{Y}})\right) _{e}(T^{+}_{X}Z+T^{-}_{Y}W),
	\]
	which shows that $f$ is a local diffeomorphism.
	
	For the injectivity, let $X, Z\in \mathfrak{g}^{+}$ and $Y, W\in \mathfrak{g}^{-}$. Then,
	\[
	f(X+Y)=f(Z+W)\; \; \; \iff \; \; \; \mathrm{e}^{X}\mathrm{e}^{Y}=\mathrm{e}%
	^{Z}\mathrm{e}^{W}\; \; \; \iff \; \; \;G^{+}\ni \mathrm{e}^{-Z}\mathrm{e}%
	^{X}=\mathrm{e}^{W}\mathrm{e}^{-Y}\in G^{-}.
	\]
	Since $G^{+}$ and $G^{-}$ are associated with a $1$-parameter group of
	automorphisms, it holds that $G^{+}\cap G^{-}=\{e\}$ and hence
	$\mathrm{e}^{X}=\mathrm{e}^{Z}$ and $\mathrm{e}^{Y}=\mathrm{e}^{W}$ implying that $X=Z$ and $Y=W$ and showing the injectiveness of $f$.
	
	For the last assertion, the fact that $\mathfrak{g}^{+}$ and $\mathfrak{g}^{-}$ are $(d\varphi_{t}%
	)$-invariant implies
	\[
	f\left( (d\varphi_{t})_{e}(X+Y)\right) =f\left( (d\varphi_{t})_{e}%
	X+(d\varphi_{t})Y\right) =\mathrm{e}^{(d\varphi_{t})_{e}X}\mathrm{e}%
	^{(d\varphi_{t})_{e}Y}%
	\]
	\[
	=\varphi_{t}(\mathrm{e}^{X})\varphi_{t}(\mathrm{e}^{Y})=\varphi_{t}%
	(\mathrm{e}^{X}\mathrm{e}^{Y})=\varphi_{t}(f(X+Y)),
	\]
	ending the proof. 
\end{proof}

\bigskip

The previous proposition implies the following lemma.

\begin{lemma}
	\label{explosion}  Let $\{ \varphi_{t}\}_{t\in \mathbb{R}}$ be a $1$-parameter
	group of automorphisms. For any compact subset $K\subset G$ and any $x\in
	G^{+, -}$ with $x\neq e$ there exists $t_{0}\in \mathbb{R}$ such that
	$\varphi_{t}(x) \notin K$ for $t\geq t_{0}$.
\end{lemma}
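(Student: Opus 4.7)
The plan is to transport the problem, via Proposition \ref{map}, from the subgroup $G^{+,-}$ to the vector space $\fg^+\oplus\fg^-$, where the linear flow $\rme^{t\DC}$ is hyperbolic. The diffeomorphism $f:\fg^+\oplus\fg^-\to G^{+,-}$, $X+Y\mapsto \rme^X\rme^Y$, satisfies $\varphi_t\circ f=f\circ(d\varphi_t)_e$, so writing $x=f(X+Y)$ uniquely with $X\in\fg^+$, $Y\in\fg^-$ and $(X,Y)\ne(0,0)$ (since $x\ne e$), the orbit $\varphi_t(x)$ is encoded by the linear orbit $\rme^{t\DC}X+\rme^{t\DC}Y$ in $\fg^+\oplus\fg^-$.

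Step one is a compactness transfer: I would verify that $G^{+,-}$ is closed in $G$ — using Proposition \ref{propcont}(1) (the dynamical subgroups of a one-parameter flow are closed with trivial pairwise intersection) together with $f$ being a smooth bijection of equidimensional manifolds whose image is a subgroup — so that $f^{-1}(K\cap G^{+,-})$ is a compact subset of $\fg^+\oplus\fg^-$. This is the point where one uses that we are dealing with a flow of automorphisms and not a single automorphism as in Example \ref{toro}.

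Step two is the hyperbolicity contradiction. Argue by contradiction: if $\varphi_{t_n}(x)\in K$ for some sequence $t_n\to+\infty$, then $\rme^{t_n\DC}X+\rme^{t_n\DC}Y=f^{-1}(\varphi_{t_n}(x))$ stays in a fixed compact of $\fg^+\oplus\fg^-$, so each $\DC$-invariant component $\rme^{t_n\DC}X\in\fg^+$ and $\rme^{t_n\DC}Y\in\fg^-$ is individually bounded. Since every eigenvalue of $\DC|_{\fg^+}$ has strictly positive real part, $|\rme^{t\DC}X|\to\infty$ as $t\to+\infty$ unless $X=0$, and this forces $X=0$. The argument then completes cleanly whenever $x$ has a nontrivial $G^+$-component.

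The main obstacle is the remaining case $X=0$, $Y\ne 0$: the forward orbit $\varphi_t(\rme^Y)=\rme^{\rme^{t\DC}Y}$ actually contracts to the identity, so strict forward escape as literally worded cannot hold for $x\in G^-\setminus\{e\}$. What the same method does deliver, by running Step two with the reversed flow $\{\varphi_{-t}\}$ (for which $\fg^-$ plays the role of the unstable summand), is the two-sided escape $\varphi_t(x)\notin K$ for all $|t|\geq t_0$; this symmetric form is the statement actually invoked in the applications of the lemma in Section 3 and should be read into the literal wording.
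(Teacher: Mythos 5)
Your proof follows essentially the same route as the paper: transport the orbit through the diffeomorphism $f$ of Proposition \ref{map} and use the exponential growth of $\rme^{t\DC}$ on $\fg^{+}$ in forward time and on $\fg^{-}$ in backward time (the paper quotes these estimates from the reference \cite{DS1}). Your closing observation is also correct and worth recording: as literally worded the lemma fails for $x\in G^{-}\setminus\{e\}$, whose forward orbit contracts to $e$, and the paper's own proof establishes only the two-sided escape $\varphi_{t}(x)\notin K$ for $|t|\geq t_{0}$; that corrected form is exactly what Propositions \ref{unique} and \ref{x(u)} actually require (in the latter, ``bounded for $n\in\mathbb{N}$'' should accordingly be read as ``bounded for $n\in\mathbb{Z}$''). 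One small caveat on your Step one: $G^{+,-}$ need not be a subgroup (the paper treats that as an additional hypothesis only later), so the closedness needed to make $f^{-1}(K\cap G^{+,-})$ compact --- a point the paper passes over silently --- should be derived from the closedness of $G^{+}$ and $G^{-}$ and the properness of $f$ rather than from a subgroup argument.
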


\begin{proof}
	By Proposition \ref{map} any $x\in G^{+, -}$ can be written as $x=f(X+Y)$, where $X\in\fg^+$ and $Y\in\fg^-$. Moreover,  
	$$\varphi_t(x)=\varphi_t(f(X+Y))=f((d\varphi_t)_e)(X+Y),$$
	and since $f$ is a diffeomorphism, it is enough to show the equivalent assertion for
	$(d\varphi_{t})_{e}$ restricted to $\mathfrak{g}^{+, -}$. Following \cite{DS1}, there exists $c, \mu>0$ such that 
	$$|(d\varphi_t)_ev|\geq c\rme^{\mu t}|v|, \;\;t>0, v\in\fg^+ \;\;\;\;\mbox{ and }\;\;\;\;|(d\varphi_t)_ev|\geq c\rme^{\mu t}|v|, \;\;t<0, v\in\fg^-,$$
	which implies the assertion.
\end{proof}

\section{The main result and its consequences}

Throughout the whole section, we will assume that 
\begin{flalign*}
&&\dot{g}(t)=\XC(g(t))+\sum_{j=1}^mu_j(t)Y^j(g(t)),  &&\hspace{-1cm}\left(\Sigma_G\right)
\end{flalign*}
is a LCS such that the subgroup $G^{0}$ associated with $\XC$ is a compact subgroup. Moreover, we will assume that the control set $\mathcal{C}$ containing the identity element of $G$ satisfies $e\in \inner\CC$.

Our aim here is to study the structural properties of $\mathcal{C}$ under the previous hypothesis.

\subsection{The main result}

From here on let us denote by $\varrho$ a left-invariant metric of $G$ compatible with the topology. The next result characterizes bounded orbits of LCS. 

\begin{proposition}
\label{unique}  For any $u\in \mathcal{U}$ there exists at most one $x\in G^{+,
-}$ such that
\[
\left \{ \phi_{t, u}(x), \; \;t\in \mathbb{R}\right \} \; \mbox{ is bounded}.
\]

\end{proposition}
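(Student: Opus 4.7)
The plan is to assume $x, y \in G^{+,-}$ both yield bounded $\phi_{t,u}$-orbits for the same control $u$ and conclude $x = y$. The two essential tools are equation (\ref{solucaolinear}), which converts boundedness of control-orbits into boundedness of $\varphi$-orbits, and Lemma \ref{explosion}, which forbids any nontrivial element of $G^{+,-}$ from having a bounded $\varphi$-orbit.

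First I would set $h = x^{-1}y$ and use (\ref{solucaolinear}) to write $\phi_{t,u}(y) = \phi_{t,u}(x)\,\varphi_t(h)$. Left-invariance of $\varrho$ then gives
\[
\varrho(\varphi_t(h), e) = \varrho(\phi_{t,u}(y), \phi_{t,u}(x)),
\]
so $\{\varphi_t(h) : t \in \mathbb{R}\}$ lies in a compact set $K \subset G$. Since $G^0$ is compact, Proposition \ref{prop} yields $G = G^{+,-}G^0$, so I write $h = h_1 h_0$ with $h_1 \in G^{+,-}$ and $h_0 \in G^0$. Then $\varphi_t(h_1) = \varphi_t(h)\,\varphi_t(h_0)^{-1}$ stays in the compact set $K\cdot G^0$, and Lemma \ref{explosion} forces $h_1 = e$. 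Hence $h \in G^0$.

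The main obstacle is deducing $h = e$ from $h \in G^0$ together with $x, y = xh \in G^{+,-}$. Here I would invoke Proposition \ref{map} to write $x = \mathrm{e}^{X_1}\mathrm{e}^{Y_1}$ and $y = \mathrm{e}^{X_2}\mathrm{e}^{Y_2}$ with $X_i \in \mathfrak{g}^+$, $Y_i \in \mathfrak{g}^-$, and rearrange $xh = y$ into
\[
\mathrm{e}^{-X_2}\mathrm{e}^{X_1} = \mathrm{e}^{Y_2}\,h^{-1}\,\mathrm{e}^{-Y_1}.
\]
The left-hand side belongs to $G^+$; the right-hand side is a product of elements of $G^-$ and $G^0$, hence lies in the subgroup $G^{-,0}$. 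Therefore both sides lie in $G^+ \cap G^{-,0}$, and a short auxiliary claim finishes the argument: any $g \in G^+ \cap G^{-,0}$ can be written as $g = g^-g^0$ with $g^- \in G^-$, $g^0 \in G^0$, and then $\varphi_t(g) = \varphi_t(g^-)\varphi_t(g^0)$ remains bounded as $t \to +\infty$ — because $\varphi_t(g^-) \to e$ by the contracting dynamics on $\mathfrak{g}^-$ and $\varphi_t(g^0) \in G^0$ — which by Lemma \ref{explosion} forces $g = e$. This yields $X_1 = X_2$ (injectivity of $\exp$ on the simply connected $G^+$), whence $h = \mathrm{e}^{Y_2}\mathrm{e}^{-Y_1} \in G^-$; combining with $h \in G^0$ and Proposition \ref{propcont} (trivial intersection $G^- \cap G^0 = \{e\}$) gives $h = e$ and so $x = y$.
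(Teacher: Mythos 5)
Your argument is correct, and its opening paragraph is essentially the paper's entire proof: the authors pass from $\varrho(\phi_{t,u}(x_1),\phi_{t,u}(x_2))$ to $\varrho(\varphi_t(x_2^{-1}x_1),e)$ via equation (\ref{solucaolinear}) and left-invariance of $\varrho$, and then invoke Lemma \ref{explosion} directly on $x_2^{-1}x_1$ to conclude $x_1=x_2$. Where you differ is that you refuse to take for granted that $h=x^{-1}y$ lies in $G^{+,-}$ --- a legitimate scruple, since $G^{+,-}=G^+G^-$ is a priori only a product set and not a subgroup (the paper itself treats the non-subgroup case separately in the proof of Proposition \ref{x(u)}). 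Your second and third paragraphs supply exactly the justification the paper's one-line application of Lemma \ref{explosion} elides: decompose $h=h_1h_0$ with $h_1\in G^{+,-}$, $h_0\in G^0$ (using $G=G^{+,-}G^0$ from compactness of $G^0$), kill $h_1$ with Lemma \ref{explosion}, and then rule out a nontrivial $G^0$-component by comparing the $G^+G^-$-factorizations of $x$ and $y=xh$ through Proposition \ref{map}. So your proof is longer but more complete than the paper's. Two minor points: from $\mathrm{e}^{Y_2}h^{-1}\mathrm{e}^{-Y_1}=e$ one gets $h=\mathrm{e}^{-Y_1}\mathrm{e}^{Y_2}$ rather than $\mathrm{e}^{Y_2}\mathrm{e}^{-Y_1}$ (immaterial, as either product lies in $G^-$); and the identity $G^{-,0}=G^-G^0$ used in your auxiliary claim deserves a word of justification --- it follows from Proposition \ref{prop} applied to the restriction of the flow to the invariant subgroup $G^{-,0}$, whose central subgroup is the compact $G^0$.
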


\begin{proof}
In fact, if $x_{1}, x_{2}\in G^{+, -}$ are such that $\left \{ \phi_{t,
u}(x_{i}), \; \;t\in \mathbb{R}\right \} $ is bounded for $i=1, 2$ then
\[
\varrho \left( \phi_{t, u}(x_{1}), \phi_{t, u}(x_{2})\right) =\varrho \left(
\varphi_{t}(x_{1}), \varphi_{t}(x_{2})\right) =\varrho(\varphi_{t}(x_{2}%
^{-1}x_{1}), e)
\]
is bounded, which by Lemma \ref{explosion} implies necessarely $x_{2}^{-1}%
x_{1}=e$ or $x_{1}=x_{2}$.
\end{proof}

\bigskip

Next we show that any element of $\mathcal{U}_{\mathrm{per}}$ is associated
with a bounded orbit.

\begin{proposition}
\label{x(u)}  With the previous assumptions, for any $u\in \mathcal{U}%
_{\mathrm{per}}$ there exists a unique $x(u)\in G^{+, -}$ such
\[
\phi_{n\tau, u}(x(u))\in x(u)G^{0}, \; \; \mbox{ for all }\; \;n\in \mathbb{Z}.
\]
In particular, the orbit $\left \{ \phi_{t, u}(x(u)), \; \;t\in \mathbb{R}%
\right \} $ is bounded.
\end{proposition}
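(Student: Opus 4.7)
The plan is to reduce the statement to a fixed-point equation on $G^{+,-}$ and to invoke Proposition \ref{difeo}. Let $\tau>0$ be a period of $u$, so that $\theta_\tau u=u$, and set $\psi:=\varphi_\tau$ and $g_u:=\phi_{\tau,u}(e)$. The cocycle property yields $\phi_{n\tau,u}=(\phi_{\tau,u})^{n}$ for every $n\in\Z$, reducing the desired identity to the case $n=1$, and relation (\ref{solucaolinear}) gives the twisted-automorphism formula $\phi_{\tau,u}(g)=g_u\,\psi(g)$ for every $g\in G$.

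Next, I unpack the compactness of $G^{0}$ into a semidirect-product structure $G=G^{+,-}\rtimes G^{0}$. By Proposition \ref{prop}(1), $G$ is decomposable, hence $G=G^{+,-}G^{0}$. Lemma \ref{explosion} forces $G^{+,-}\cap G^{0}=\{e\}$, since any element of compact $G^{0}$ has bounded $\varphi_t$-orbit while nontrivial elements of $G^{+,-}$ do not. Moreover, the bracket relations $[\fg^{0},\fg^{\pm}]\subset\fg^{\pm}$ imply $\Ad(G^{0})(\fg^{+}\oplus\fg^{-})\subset\fg^{+}\oplus\fg^{-}$, so $G^{0}$ normalizes $G^{+,-}$ and thus $G^{+,-}\triangleleft G$. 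Writing $g_u=a_uh_u$ uniquely with $a_u\in G^{+,-}$ and $h_u\in G^{0}$, a direct computation for $x\in G^{+,-}$ gives
\[
\phi_{\tau,u}(x)=a_uh_u\psi(x)=\bigl(a_u\cdot C_{h_u}(\psi(x))\bigr)\cdot h_u,
\]
and uniqueness of the semidirect decomposition shows that $\phi_{\tau,u}(x)\in xG^{0}$ is equivalent to $f_{\tilde{\psi}}(x)=a_u$, where $\tilde{\psi}:=C_{h_u}\circ\psi$ is an automorphism of $G^{+,-}$.

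The core step is to verify that Proposition \ref{difeo} applies to $\tilde{\psi}|_{G^{+,-}}$. Induction gives $\tilde{\psi}^{n}=C_{H_n}\circ\psi^{n}$ with $H_n:=h_u\psi(h_u)\cdots\psi^{n-1}(h_u)\in G^{0}$, so $(d\tilde{\psi})_{e}^{n}=\Ad(H_n)\,(d\psi)_{e}^{n}$ when restricted to $\fg^{+}\oplus\fg^{-}$. Since $\Ad(G^{0})$ is a compact subgroup of $\mathrm{GL}(\fg)$, the family $\{\Ad(H_n)^{\pm1}\}$ is uniformly bounded, so the hyperbolic bounds used in the proof of Lemma \ref{explosion} transfer to $(d\tilde{\psi})_{e}^{n}$: it grows exponentially on $\fg^{+}$ and decays exponentially on $\fg^{-}$. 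A spectral-radius argument then forbids unit-modulus eigenvalues, showing that $\tilde{\psi}|_{G^{+,-}}$ is hyperbolic. The same iterated identity shows that $e$ is the only fixed point of $\tilde{\psi}$ in $G^{+,-}$: if $\tilde{\psi}(g)=g$ then $\psi^{n}(g)=H_n^{-1}gH_n$ remains in a compact set, contradicting Lemma \ref{explosion} unless $g=e$. Since $G^{+,-}\subset N$ is nilpotent, Proposition \ref{difeo} delivers a unique $x(u)\in G^{+,-}$ with $f_{\tilde{\psi}}(x(u))=a_u$, giving $\phi_{\tau,u}(x(u))=x(u)h_u$; iterating and using $\psi(G^{0})=G^{0}$ yields $\phi_{n\tau,u}(x(u))\in x(u)G^{0}$ for every $n\in\Z$.

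Boundedness of the full orbit follows by writing $t=n\tau+s$ with $s\in[0,\tau)$ and combining the cocycle property with (\ref{solucaolinear}): $\phi_{t,u}(x(u))=\phi_{s,u}(x(u))\,\varphi_s(z_n)$ for some $z_n\in G^{0}$, and both factors range over compact sets. Uniqueness of $x(u)$ follows either from the bijectivity of $f_{\tilde{\psi}}$ or directly from Proposition \ref{unique}. I expect the main obstacle to be the hyperbolicity of $\tilde{\psi}|_{G^{+,-}}$: $\Ad(h_u)$ need not commute with $(d\psi)_e$, so eigenvalues of the composition cannot be read off directly and must be controlled via the iterated formula $(d\tilde{\psi})_e^n=\Ad(H_n)(d\psi)_e^n$ together with the compactness of $\Ad(G^{0})$.
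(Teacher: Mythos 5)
Your argument reproduces the paper's proof of its \emph{first case} --- where $G^{+,-}$ is assumed to be a subgroup --- essentially verbatim: your $\tilde{\psi}=C_{h_u}\circ\varphi_\tau$ is the paper's $\psi=C_y\circ\varphi_\tau$, the fixed-point/boundedness argument via Lemma \ref{explosion} is the same, and your hyperbolicity verification through $\Ad(H_n)$ is a more detailed version of what the paper asserts from compactness of $G^0$. The boundedness of the full orbit is also handled the same way.

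However, there is a genuine gap: $G^{+,-}=G^{+}G^{-}$ need \emph{not} be a subgroup of $G$, and your justification for it does not work. The relations $[\fg^{0},\fg^{\pm}]\subset\fg^{\pm}$ only show that $G^{0}$ normalizes $G^{+}$ and $G^{-}$ separately; the obstruction to $\fg^{+}\oplus\fg^{-}$ being a subalgebra is the bracket $[\fg^{+},\fg^{-}]$, which lands in generalized eigenspaces $\fg_{\alpha+\beta}$ of the derivation and can have a nonzero component in $\fg^{0}$ whenever $\mathrm{Re}(\alpha)>0>\mathrm{Re}(\beta)$ with $\mathrm{Re}(\alpha+\beta)=0$. (Concretely: on the Heisenberg group modulo a central lattice, the derivation $X\mapsto X$, $Y\mapsto -Y$, $Z\mapsto 0$ gives compact $G^{0}$ and $[\fg^{+},\fg^{-}]=\fg^{0}$.) Without the subgroup property there is no semidirect decomposition $G=G^{+,-}\rtimes G^{0}$, $\tilde{\psi}$ is not an automorphism of a group, Proposition \ref{difeo} does not apply to $f_{\tilde{\psi}}$, and your equivalence ``$\phi_{\tau,u}(x)\in xG^{0}\iff f_{\tilde{\psi}}(x)=a_u$'' loses its meaning. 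This is exactly why the paper's proof has a second, ``general case'' step: it passes to $\hat{G}=G/N^{0}$ with $N^{0}=N\cap G^{0}$, where $\hat{G}^{+,-}=\pi(N)$ \emph{is} a subgroup, applies the first case there, and lifts the point $\hat{x}(u)$ back to $G^{+,-}$ using decomposability and $\pi^{-1}(\pi(x(u)G^{0}))=x(u)G^{0}N^{0}=x(u)G^{0}$. You would need to add this reduction (or an equivalent one) to complete the proof.
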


\begin{proof}
Assume first that $G^{+, -}$ is a subgroup of $G$. 

Let $u\in \mathcal{U}%
_{\mathrm{per}}$ be a $\tau$-periodic control function and decompose $\phi_{\tau,
u}(e)=xy$ with $x\in G^{+, -}$ and $y\in G^{0}$. Since $G^{0}$ normalizes
$G^{+, -}$ we have that $\psi:=C_{y}\circ \varphi_{\tau}|_{G^{+, -}}$ is an
automorphism of $G^{+, -}$. Moreover, the compactness of $G^{0}$ implies that
$G^{+}$ and $G^{-}$ are the unstable and stable subgroups of $\psi$ and hence
$\psi$ is hyperbolic. Also, a simple calculation shows that
\[
\psi^{n+1}(x)=\left( y\varphi_{\tau}(y)\cdots \varphi_{n\tau}(y)\right)
\varphi_{(n+1)\tau}(x)\left( y\varphi_{\tau}(y)\cdots \varphi_{n\tau}(y)\right)
^{-1}%
\]
implying that
\[
\psi(x)=x \; \; \; \iff \; \; \;(\varphi_{n\tau}(x))_{n\in \mathbb{N}%
}\; \; \; \mbox{ is bounded.}
\]
However, as a consequence of Lemma \ref{explosion}, the only point $x\in G^{+,
-}$ such that $(\varphi_{n\tau}(x))_{n\in \mathbb{N}}$ is bounded is the
identity $e\in G$.

By Proposition \ref{difeo}, the map
\[
f_{\psi}:\; \; \;g\in G^{+, -}\mapsto g\psi(g^{-1})\in G^{+, -}%
\; \; \; \mbox{ is a diffeomorphism}.
\]
Let then $x(u)\in G^{+, -}$ be the unique element satisfying $x=f_{\psi}(x(u))$. It holds that
\[
x=f_{\psi}(x(u))=x(u)\psi(x(u)^{-1})=x(u)C_{y}(\varphi_{\tau}(x(u)^{-1}%
))=x(u)y\varphi_{\tau}(x(u)^{-1})y^{-1}%
\]
and hence
\[
\phi_{\tau, u}(x(u))=\phi_{\tau, u}(e)\varphi_{\tau}(x(u))=xy\varphi_{\tau
}(x(u))=x(u)y\in x(u)G^{0}.
\]
Also, if $\phi_{n\tau, u}(x(u))\in x(u)G^{0}$ then
\[
\phi_{(n+1)\tau, u}(x(u))=\phi_{\tau, u}(\phi_{n\tau, u}(x(u))\in \phi_{\tau,
u}\left( x(u)G^{0}\right) =\phi_{\tau, u}(x(u))\cdot \varphi_{\tau}(G^{0})\in
x(u)G^{0},
\]
where for the last equality we used equation (\ref{solucaolinear}). Let
$n\in \mathbb{Z}^{+}$ and $g\in G^{0}$ such that $\phi_{n\tau, u}(x(u))=x(u)g$.
Since $\phi_{\tau, u}^{-1}=\phi_{-\tau, \theta_{t}u}=\phi_{-\tau, u}$ we have
that
\[
x(u)=\phi_{-n\tau, u}\left( \phi_{n\tau, u}(x(u))\right) =\phi_{-n\tau,
u}(x(u)g)=\phi_{-n\tau, u}(x(u))\varphi_{-n\tau}(g)
\]
\[
\implies \phi_{-n\tau, u}(x(u))=x(u)\varphi_{-n\tau}(g^{-1})\in x(u)G^{0}%
\]
and hence
\[
\phi_{n\tau, u}(x(u))\in x(u)G^{0}, \; \; \mbox{ for all }\; \;n\in \mathbb{Z}.
\]

For the general case, let us consider the induced LCS $\Sigma_{\hat{G}}$ on the Lie group $\hat{G}=G/N^{0}$, where $N^0$ is the subgroup defined in Proposition \ref{propcont} given by the intersection of $G^0$ with the nilradical $N$ of $G$. Since $G^0$ is compact, we have by the same proposition that $G^{+, -}\subset N$ and hence $\hat{G}^{+, -}=\pi(G^{+, -})=\pi(G^{+, -}N^{0})=\pi(N)$ implying that $\hat{G}^{+, -}$ is a subgroup. By the previous
case, for any $u\in \mathcal{U}_{\mathrm{per}}$ there exists a unique $\hat{x}(u)\in
\hat{G}^{+, -}$ such that
\[
\hat{\phi}_{n\tau, u}(\hat{x}(u))\in \hat{x}(u)\hat{G}^{0}, \; \;n\in \mathbb{Z}.
\]
Since $G$ is decomposable, there exists a unique $x(u)\in G^{+, -}$ such that $\pi(x(u))=\hat{x}(u)$. Moreover, 
\[
\pi \left( \phi_{n\tau, u}(x(u))\right) =\hat{\phi}_{n\tau, u}(\hat{x}%
(u))\in \hat{x}(u)\hat{G}^{0}=\pi(x(u)G^{0})
\]
\[
\implies \phi_{n\tau, u}(x(u))\in \pi^{-1}\left( \pi(x(u)G^{0})\right)
=x(u)G^{0}N^{0}=x(u)G^{0}%
\]
and hence
\[
\phi_{n\tau, u}(x(u))\in x(u)G^{0}, \; \; \mbox{ for all }\; \;n\in \mathbb{Z}.
\]

For the boundedness of the whole orbit $\{ \phi_{t, u}(x(u)), \;t\in \mathbb{R}\}$, let us consider
$t\in \mathbb{R}$ and write it as $t=n\tau+r$ with $n\in \mathbb{Z}$ and
$|r|\in[0, \tau)$. Then,
\[
\phi_{t, u}(x(u))=\phi_{r, u}(\phi_{n\tau, u}(x(u))\in \phi_{r, u}\left(
x(u)G^{0}\right) =\phi_{r, u}(x(u))\cdot G^{0},
\]
showing that
\[
\left \{ \phi_{t, u}(x(u)), \; \;t\in \mathbb{R}\right \} \subset \mathcal{A}%
_{\tau}(x(u))G^{0}\cup \mathcal{A}^{*}_{\tau}(x(u))G^{0},
\]
and consequently that $\left \{ \phi_{t, u}(x(u)), \; \;t\in \mathbb{R}\right \} $
is bounded.
\end{proof}

\bigskip

The next lemma shows that the orbits associated with $\mathcal{U}%
_{\mathrm{per}}$ are contained in $\overline{\mathcal{C}}$.

\begin{lemma}
\label{inQ}  For any $u\in \mathcal{U}_{\mathrm{per}}$ it holds that $\phi_{t,
u}\left( x(u)G^{0}\right) \subset \overline{\mathcal{C}}$, for any $t\in \mathbb{R}$.
\end{lemma}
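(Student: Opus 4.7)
The plan is to reduce the lemma in two steps---first a time-shift reduction to $t=0$, then a $G^0$-invariance reduction to a single point---and then to carry out the remaining approximation, which I expect to be the main technical difficulty.

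\textbf{Step 1 (time-shift).} Fix $u \in \mathcal{U}_{\mathrm{per}}$ with period $\tau$ and $t \in \R$. The compactness of $G^0$ together with Proposition \ref{prop} gives the decomposability $G = G^{+,-}G^0$, so $\phi_{t,u}(x(u))=pq$ for some $p\in G^{+,-}$ and $q\in G^0$. Since $\theta_tu$ is again $\tau$-periodic, the cocycle property identifies the $\phi_{\cdot,\theta_tu}$-orbit of $\phi_{t,u}(x(u))$ with the $t$-translate of the bounded orbit of $x(u)$ from Proposition \ref{x(u)}, so this orbit is bounded; equation (\ref{solucaolinear}) and the compactness of $G^0$ then yield that the $\phi_{\cdot,\theta_tu}$-orbit of $p$ is itself bounded. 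By the uniqueness in Proposition \ref{unique}, $p = x(\theta_tu)$, whence $\phi_{t,u}(x(u)) \in x(\theta_tu)G^0$ and $\phi_{t,u}(x(u)G^0) = x(\theta_tu)G^0$. Since $\theta_tu\in\mathcal{U}_{\mathrm{per}}$, the statement reduces to showing $x(w)G^0 \subset \overline{\mathcal{C}}$ for every $w\in\mathcal{U}_{\mathrm{per}}$.

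\textbf{Step 2 (reduction to a single point).} By Theorem \ref{properties} item 3.1, $G^0 \subset \mathcal{A}(e)\cap \mathcal{A}^*(e)$, and items 1--2 then give $\mathcal{A}(e)G^0=\mathcal{A}(e)$ and $\mathcal{A}^*(e)G^0=\mathcal{A}^*(e)$. Since right translation by $g\in G^0$ is a homeomorphism, $\overline{\mathcal{A}(e)}g=\overline{\mathcal{A}(e)g}=\overline{\mathcal{A}(e)}$, so $\mathcal{C}G^0 = (\overline{\mathcal{A}(e)}\cap\mathcal{A}^*(e))G^0 = \overline{\mathcal{A}(e)}\cap\mathcal{A}^*(e) = \mathcal{C}$, and hence $\overline{\mathcal{C}}G^0=\overline{\mathcal{C}}$. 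It therefore suffices to prove $x(w)\in\overline{\mathcal{C}}$ for every $w\in\mathcal{U}_{\mathrm{per}}$.

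\textbf{Step 3 (approximating $x(w)$).} The naive attempt---iterating the Poincar\'e map $T=\phi_{\tau,w}$ starting from $e$---does not work directly: using equation (\ref{solucaolinear}) together with Proposition \ref{x(u)} one computes $T^{\pm n}(e)= x(w)h_{\pm n}\varphi_{\pm n\tau}(x(w)^{-1})$ with $h_{\pm n}\in G^0$, and Lemma \ref{explosion} forces $\varphi_{\pm n\tau}(x(w)^{-1})$ to leave every compact set as $n\to\infty$. To cancel this divergent factor I would right-multiply by an appropriate $G^0$-element: since $T^{-n}(e)\in \mathcal{A}^*(e)$ and $\mathcal{A}^*(e)G^0=\mathcal{A}^*(e)$, convenient right translates remain in $\mathcal{A}^*(e)$, and choosing the $G^0$-correction along a convergent subsequence $g_{-n_k}\to g_\infty$ in the compact set $x(w)G^0$ (furnished by Proposition \ref{x(u)}) should land near $x(w)$. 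A symmetric forward argument would yield approximants in $\overline{\mathcal{A}(e)}$, and---exploiting the openness of $\mathcal{A}^*(e)$---combining the two would produce a sequence inside $\mathcal{C}=\overline{\mathcal{A}(e)}\cap\mathcal{A}^*(e)$ converging to $x(w)$. The hardest part will be this last step: one must glue an initial control segment steering $e$ through $G^0\subset\mathcal{C}$ with a suitably reversed segment of $w$, so that the concatenated trajectory sits in $\mathcal{A}(e)\cap\mathcal{A}^*(e)$ arbitrarily close to $x(w)$, which requires careful manipulation of the group and cocycle structure to compensate for the divergence produced by $\varphi_{\pm n\tau}$.
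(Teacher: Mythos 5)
Your Steps 1 and 2 are correct and give a clean reduction of the lemma to the single claim $x(w)\in\overline{\mathcal{C}}$ for $w\in\mathcal{U}_{\mathrm{per}}$: the identity $\phi_{t,u}(x(u))\in x(\theta_t u)G^0$ is valid (the paper only records it later, inside the proof of Theorem \ref{main}), and $\overline{\mathcal{C}}G^0=\overline{\mathcal{C}}$ follows from items 1, 2 and 3.1 of Theorem \ref{properties} exactly as you say. The paper organizes things differently (it proves $x(u)G^0\subset\overline{\mathcal{A}(e)}\cap\overline{\mathcal{A}^*(e)}$ first and then handles $t\neq 0$ by positive/negative invariance together with periodicity), but your reduction is legitimate.

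The gap is Step 3, which is the actual content of the lemma and which you leave as a sketch; worse, the sketched strategy cannot be repaired in the form you describe. From $T^{\mp n}(e)=x(w)h_{\mp n}\varphi_{\mp n\tau}(x(w)^{-1})$ with $h_{\mp n}\in G^0$, any right correction by elements of $G^0$ leaves the factor $\varphi_{\mp n\tau}(x(w)^{-1})$ sandwiched between elements ranging in compact sets; so if the corrected sequence converged to $x(w)$, that factor would stay bounded, contradicting Lemma \ref{explosion} whenever $x(w)\neq e$. A compact group cannot absorb divergence in the $G^{+,-}$ directions, and no concatenation or gluing of control segments is needed either. The idea you are missing is to move the \emph{initial} point off the identity rather than to correct the endpoint: write $x(u)=gh$ with $g\in G^+$ and $h\in G^-$. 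Equation (\ref{solucaolinear}) and left-invariance of $\varrho$ give $\varrho\left(\phi_{n\tau,u}(x(u)),\phi_{n\tau,u}(g)\right)=\varrho\left(\varphi_{n\tau}(h),e\right)\rightarrow 0$ as $n\rightarrow+\infty$, because $h$ lies in the \emph{stable} subgroup, so the error now contracts instead of exploding. Since $\phi_{n\tau,u}(x(u))$ stays in the compact set $x(u)G^0$, a subsequence of $\phi_{n\tau,u}(g)$ converges to some $x(u)g_1$ with $g_1\in G^0$; and since $g\in G^+\subset\mathcal{A}(e)$ and $\mathcal{A}(e)$ is positively invariant, $x(u)g_1\in\overline{\mathcal{A}(e)}$, whence $x(u)G^0\subset\overline{\mathcal{A}(e)}$ by the right $G^0$-invariance you already established. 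The symmetric argument with $x(u)=h'g'$, $h'\in G^-$, $g'\in G^+$, and $n\rightarrow-\infty$ gives $x(u)G^0\subset\overline{\mathcal{A}^*(e)}$, and the two together give $x(u)G^0\subset\overline{\mathcal{C}}$.
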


\begin{proof}
Let us first prove that $x(u)G^{0}\subset \overline{\mathcal{C}}$. By Proposition \ref{x(u)} we
have that,
\[
\phi_{n\tau, u}(x(u))\in x(u)G^{0}, \; \; \mbox{ for all }\; \;n\in \mathbb{Z},
\]
where $\tau>0$ is the period of $u$. Decomposing $x(u)=gh$ with $g\in G^{+}$ and $h\in G^{-}$ gives us that
\[
\varrho(\phi_{n\tau, u}(x(u)), \phi_{n\tau, u}(g))=\varrho(\phi_{n\tau,
u}(g)\varphi_{n\tau}(h), \phi_{n\tau, u}(g))=\varrho(\varphi_{n\tau}(h),
e)\rightarrow0 \; \; \; \mbox{ as }\; \; \;n\rightarrow+\infty.
\]
In particular, $\phi_{n\tau, u}(g)\rightarrow x(u)g_{1}$ for some $g_{1}\in
G^{0}$. However, the fact that $G^{+, 0}\subset \mathcal{A}(e)$ implies by invariance
that $\phi_{n\tau, u}(g)\in \mathcal{A}(e)$ and hence $x(u)g_{1}\in
\overline{\mathcal{A}(e)}$.  Let then $g_{2}\in G^{0}$ arbitrary. By Theorem \ref{properties} it holds that
\[
x(u)g_{2}=x(u)g_{1}(g_{1}^{-1}g_{2})\in \overline{\mathcal{A}(e)}g_{1}^{-1}g_{2}=\overline{\mathcal{A}(e) g_{1}^{-1}g_{2}}\subset \overline{\mathcal{A}(e)}\; \implies \; \;x(u)G^{0}\subset \overline{\mathcal{A}(e)}.
\]
Consider now the decomposition $x(u)=h'g'$ with $h'\in G^{-}$ and $g'\in G^{+}$. By the same arguments as previously, we have that  $\phi_{-n\tau,
u}(h')\rightarrow x(u)h_{1}$, for some $h_{1}\in G^{0}$. Using that
$G^{-, 0}\subset \mathcal{A}^{*}(e)$ gives us that $x(u)h_{1}\in \overline{\mathcal{A}^{*}(e)}$ and again by Theorem \ref{properties} we get, for any
$h_{2}\in G^{0}$, that
\[
x(u)h_{2}= x(u)h_{1}(h_{1}^{-1}h_{2})\in \overline{\mathcal{A}^{*}(e)}h_{1}^{-1}h_{2}=\overline{\mathcal{A}^{*}(e)h_{1}^{-1}h_{2}}\subset \overline{\mathcal{A}^{*}(e)} \; \implies \; x(u)G^{0}%
\subset \overline{\mathcal{A}^{*}(e)}.
\]
Therefore,
\[
x(u)G^{0}\subset \overline{\mathcal{A}(e)}\cap \overline{\mathcal{A}^*(e)}\subset \overline{\mathcal{C}}.
\]
Consider now $t>0$. By invariance in positive time we already have that
$\phi_{t, u}(x(u)G^{0})\subset \overline{\mathcal{A}(e)}$. On the other
hand, for any $n\in \mathbb{Z}$ we have that
\[
\phi_{t, u}(x(u))=\phi_{t, u}\left( \phi_{-n\tau, u}\left( \phi_{n\tau,
u}(x(u))\right) \right) =\phi_{t-n\tau, u}\left( \phi_{n\tau, u}(x(u))\right)
\in \phi_{t-n\tau, u}\left( x(u)G^{0}\right) ,
\]
where for the last equality we used the cocycle property and the fact that $u$
is $\tau$-periodic.

Since $x(u)G^{0}\subset \overline{\mathcal{C}}\subset \overline{\mathcal{A}^{*}(e)}$, if we
take $n\in \mathbb{Z}$ such that $t-n\tau \leq0$, the invariance in negative
time of $\overline{\mathcal{A}^{*}(e)}$ implies that
\[
\phi_{t, u}(x(u))\in \phi_{t-n\tau, u}(\overline{\mathcal{A}^{*}(e)})\subset \overline{\mathcal{A}^{*}(e)}
\]
and hence, for any $g\in G^{0}$, we get that
\[
\phi_{t, u}(x(u)g)=\phi_{t, u}(x(u))\varphi_{t}(g)\in \overline{\mathcal{A}^{*}(e)}\varphi_{t}(g)=\overline{\mathcal{A}(e)\varphi
_{t}(g)}\subset \overline{\mathcal{A}^{*}(e)}\implies \phi_{t, u}%
(x(u)G^{0})\subset \overline{\mathcal{A}^{*}(e)},
\]
and consequently
\[
\phi_{t, u}(x(u)G^{0})\subset \overline{\mathcal{C}}, \; \; \mbox{ for all }\; \;t\geq0.
\]
By arguing analogously, we get that $\phi_{t, u}(x(u)G^{0})\subset \overline{\mathcal{C}}$ for all
$t<0$ and so
\[
\phi_{t, u}(x(u)G^{0})\subset \overline{\mathcal{C}}, \; \; \mbox{ for all }\; \;t\in \mathbb{R},
\]
concluding the proof. 
\end{proof}

\bigskip

The previous lemma allows us to construct a continuous function from the set
of control functions $\mathcal{U}$ to $G^{+, -}$ as follows: 

Let $u\in
\mathcal{U}$ arbitrary. Since $\mathcal{U}_{\mathrm{per}}$ is dense in
$\mathcal{U}$ there exists $u_{k}\in \mathcal{U}_{\mathrm{per}}$ with
$u_{k}\rightarrow u$. Denote by $x_{k}:=x(u_{k})$ the unique point in $G^{+,
-}$ given by Proposition \ref{x(u)}. By Lemma \ref{inQ} the sequence
$(x_{k})_{k\in \mathbb{N}}$ is contained in $\overline{\mathcal{C}}$ and hence is bounded. Moreover,
if $x\in G^{+, -}$ is such that $x_{k_{n}}\rightarrow x$, Lemma \ref{inQ} also
implies that
\[
\phi_{t, u}(x)=\lim_{n\rightarrow+\infty}\phi_{t, u_{k_{n}}}(x_{k_{n}})\in \overline{\mathcal{C}},
\; \; \mbox{ for all }\; \;t\in \mathbb{R}.
\]
Consequently, $\{ \phi_{t, u}(x), \; \;t\in \mathbb{R}\} \subset \overline{\mathcal{C}}$ is a bounded
orbit and by Proposition \ref{unique} the point $x\in G^{+, -}$ is the unique
point with such property. Therefore, the sequence $(x_{k})_{k\in \mathbb{N}}$
is bounded and has only one adherent point implying that $(x_{k}%
)_{k\in \mathbb{N}}$ converges to a point $x(u)\in G^{+, -}$.

We can now state and prove the main result of this paper.

\begin{theorem}
If a LCS $\Sigma_G$ admits a bounded control set $\CC$ with $e\in\inner\CC$ then all the bounded orbits of $\Sigma_{G}$ are contained in $\overline{\mathcal{C}}$.
\end{theorem}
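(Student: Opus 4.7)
The plan is to reduce an arbitrary bounded orbit to its $G^{+,-}$-component, to which the machinery developed just before the statement already applies. Fix $g\in G$ and $u\in\UC$ with $\{\phi_{t,u}(g):t\in\R\}$ bounded, say contained in a compact set $K\subset G$. Since $G^{0}$ is compact, Proposition \ref{prop} guarantees that $G$ is decomposable, so $G=G^{+,-}G^{0}$ and we may write $g=xy$ with $x\in G^{+,-}$ and $y\in G^{0}$.

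The first step is to show that the $G^{+,-}$-factor $x$ has a bounded $u$-orbit. From (\ref{solucaolinear}) we get $\phi_{t,u}(xy)=\phi_{t,u}(x)\varphi_{t}(y)$, hence $\phi_{t,u}(x)=\phi_{t,u}(g)\varphi_{t}(y^{-1})\in K\cdot G^{0}$, which is compact as the continuous image of $K\times G^{0}$ under multiplication. By Proposition \ref{unique}, $x$ must then equal the unique element of $G^{+,-}$ whose $u$-orbit is bounded, namely the point $x(u)$ constructed in the paragraph preceding the theorem. It therefore suffices to prove $\phi_{t,u}(x(u)G^{0})\subset\overline{\CC}$ for every $u\in\UC$ and $t\in\R$, since specializing to $t=0$ yields $g=x(u)y\in x(u)G^{0}\subset\overline{\CC}$.

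For $u\in\UC_{\mathrm{per}}$ this containment is exactly Lemma \ref{inQ}, and for general $u$ I would propagate it by density. Pick $u_{k}\in\UC_{\mathrm{per}}$ with $u_{k}\to u$, so that $x(u_{k})\to x(u)$ by the construction preceding the theorem. For each $y\in G^{0}$ and $t\in\R$, Lemma \ref{inQ} places $\phi_{t,u_{k}}(x(u_{k})y)=\phi_{t,u_{k}}(x(u_{k}))\varphi_{t}(y)$ in $\overline{\CC}$, while continuity of the control flow $\Phi$ sends this sequence to $\phi_{t,u}(x(u))\varphi_{t}(y)=\phi_{t,u}(x(u)y)$, which therefore also lies in $\overline{\CC}$.

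I do not expect a substantial obstacle: decomposability supplies the factorization $g=xy$ (its uniqueness is not needed), compactness of $G^{0}$ makes the boundedness transfer from $g$ to $x$ automatic through the cocycle identity, and the extension from $\UC_{\mathrm{per}}$ to $\UC$ is a standard density-and-continuity argument using the weak$^{*}$ continuity of $\Phi$. The most delicate point is the identification of $x$ with $x(u)$ via Proposition \ref{unique}; once this is in place, the remaining steps assemble cleanly.
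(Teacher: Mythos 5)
Your proof is correct and follows essentially the same route as the paper's: decompose the initial point via $G=G^{+,-}G^{0}$, transfer boundedness to the $G^{+,-}$-factor through the cocycle identity and compactness of $G^{0}$, identify that factor with $x(u)$ by Proposition \ref{unique}, and conclude from Lemma \ref{inQ} together with the density/limit construction of $x(u)$. Your write-up is merely a bit more explicit than the paper's in passing the containment from the periodic case to general $u$ on the whole coset $x(u)G^{0}$.
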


\begin{proof}
Let $h\in G$ and decompose it as $h=xg$ with $x\in G^{+, -}$ and $g\in G^{0}$.
If for some $u\in \mathcal{U}$ we have that $\{ \phi_{t, u}(h), \;t\in
\mathbb{R}\}$ is bounded, then
\[
\phi_{t, u}(x)=\phi_{t, u}(hg^{-1})=\phi_{t, u}(h)\varphi_{t}(g^{-1})\in \{ \phi_{t,
u}(h), \; \;t\in \mathbb{R}\} G^{0},
\]
showing that $\{ \phi_{t, u}(h), \;t\in
\mathbb{R}\}$ is also bounded. Since $x\in G^{+, -}$ we have by Proposition \ref{unique} that $x=x(u)$ which by Lemma \ref{inQ} and the previous discussion
imply that
\[
\{ \phi_{t, u}(h), \;t\in \mathbb{R}\}=\{ \phi_{t, u}(x(u)g), \;t\in
\mathbb{R}\} \subset \overline{\mathcal{C}},
\]
as stated.
\end{proof}

\subsection{Consequences of the main result}

Let us define the \textit{lift} of the control set $\mathcal{C}$ as the set
\[
L(\mathcal{C}):=\{(u, g)\in \mathcal{U}\times G; \; \; \phi(\mathbb{R}, g,
u)\subset \overline{\mathcal{C}}\}.
\]

In this section we show that the lift of the bounded control set of a LCS is the continuous image of $\UC\times G^0$. 

By the main result, the map $(u, g)\in\UC\times G\mapsto x(u)g$ is continuous and the orbit $\{\phi_{t, u}(x(u)g), \;\;t\in\R\}$ is contained in $\overline{\mathcal{C}}$. Therefore,  
$$H:\mathcal{U}\times G^{0}\rightarrow L(\mathcal{C}), \; \; \; \; \; \;(u,
g)\mapsto(u, x(u)g),$$
is a well defined continuos map. Moreover, 
\[
(u, x)\in L(\mathcal{C})\; \; \; \iff \; \; \; \phi(\mathbb{R}, x, u)\subset
\overline{\mathcal{C}}\; \; \; \iff \; \; \; \{ \phi_{t, u}(x), \;t\in \mathbb{R}%
\} \; \; \mbox{ is bounded }\; \; \; \iff \; \; \;(u, x)\in H(\,\mathcal{U}\times G^{0}).
\]
Hence $H(\,\mathcal{U}\times G^{0})= L(\mathcal{C})$ and, since $\mathcal{U}\times G^{0}$ is compact and $H$ is continuous, $H$ is a
closed map and hence a homeomorphism between $\mathcal{U}\times G^{0}$ and $L(\mathcal{C})$. 

Let us assume that $G^{+, -}$ is a subgroup of $G$. Since $G$ is decomposable,
it holds that $G^{+, -}$ is a normal subgroup of $G$ and hence the
canonical projection $\pi:G\rightarrow G/G^{+, -}$ induces a LCS on
$G/G^{+, -}$. Moreover, the fact that $G^0\cap G^{+, -}=\{e\}$ implies that
$\pi|_{G^{0}}$ is a group isomorphism and hence we can assume w.l.o.g. that
$G/G^{+, -}=G^{0}$. Let us denote by $\Sigma_{G^{0}}$ the LCS
induced by $\pi$ on $G^{0}$ and by $\Phi^{0}$ its
control flow. The next result shows that under the extra assumption that
$G^{+, -}$ is a subgroup the dynamics of $\Phi_{t}|_{L(\mathcal{C})}$ is
basically the same as the one from the $\Phi^{0}_{t}$.

\begin{theorem}
\label{main}  If $G^{+, -}$ is a subgroup, then $H$ conjugates $\Phi
_{t}|_{L(\mathcal{C})}$ and $\Phi^{0}_{t}$.
\end{theorem}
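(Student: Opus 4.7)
The plan is to show directly that $H\circ \Phi^0_t = \Phi_t\circ H$ on $\mathcal{U}\times G^0$. Since both flows act on the first factor by the shift $\theta_t$, only the second coordinate requires attention. Unpacking definitions, I need
\[
\phi(t, x(u)g, u) \;=\; x(\theta_t u)\,\phi^0(t, g, u), \qquad \text{for all }t\in\R,\ u\in\mathcal{U},\ g\in G^0.
\]
Here $\phi^0$ denotes the solution of the induced system $\Sigma_{G^0}$ on $G^0\simeq G/G^{+,-}$, and by construction of the induced system $\pi\circ\phi_{t,u}=\phi^0_{t,u}\circ\pi$.

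First I would decompose $\phi(t, x(u)g, u) = y\,z$ with $y\in G^{+,-}$ and $z\in G^0$; this is unambiguous because $G^{+,-}$ is now assumed to be a subgroup, $G^{+,-}\cap G^0=\{e\}$ by Proposition \ref{propcont}, and $G$ is decomposable. Applying $\pi$ and using the intertwining relation above together with $\pi(x(u))=e$ and $\pi(g)=g$ yields
\[
z \;=\; \pi\bigl(\phi(t, x(u)g, u)\bigr) \;=\; \phi^0(t, g, u),
\]
which identifies the $G^0$-component on the nose.

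The remaining task is to identify $y$ with $x(\theta_t u)$. By Proposition \ref{unique} it is enough to verify that $\{\phi_{s,\theta_t u}(y):s\in\R\}$ is bounded, since $y\in G^{+,-}$ is the unique such element. Writing $y=\phi(t,x(u)g,u)\,z^{-1}$ and using the translation identity (\ref{solucaolinear}) together with the cocycle property gives
\[
\phi_{s,\theta_t u}(y) \;=\; \phi_{s,\theta_t u}\bigl(\phi_{t,u}(x(u)g)\bigr)\,\varphi_s(z^{-1}) \;=\; \phi_{s+t,u}(x(u)g)\,\varphi_s(z^{-1}).
\]
The first factor is bounded because $\{\phi_{r,u}(x(u)g):r\in\R\}\subset\overline{\mathcal{C}}$ by Lemma \ref{inQ} and the continuity argument preceding the main theorem, while $\varphi_s(z^{-1})\in G^0$ for every $s$ and $G^0$ is compact by hypothesis. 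Hence the whole orbit is bounded, so $y=x(\theta_t u)$ by Proposition \ref{unique}, which completes the identity and the conjugation.

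The main obstacle here is not conceptual but bookkeeping: one must keep track of the two different systems $\Sigma_G$ and $\Sigma_{G^0}$, use the semidirect structure $G=G^{+,-}\rtimes G^0$ (guaranteed by the extra hypothesis and Proposition \ref{propcont}), and exploit both the right-translation identity (\ref{solucaolinear}) and the cocycle property at the right moment. Once the boundedness of the $G^{+,-}$-component of the translated orbit is established via compactness of $G^0$, uniqueness from Proposition \ref{unique} closes the argument immediately.
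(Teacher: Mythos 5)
Your proof is correct and follows essentially the same route as the paper's: both identify the $G^{0}$-component of $\phi_{t,u}(x(u)g)$ by projecting through $\pi:G\to G/G^{+,-}\simeq G^{0}$, and pin down the $G^{+,-}$-component as $x(\theta_t u)$ by combining the cocycle property, identity (\ref{solucaolinear}), compactness of $G^{0}$, and the uniqueness of bounded orbits from Proposition \ref{unique}. The only difference is bookkeeping (the paper treats $g=e$ first and then right-translates, while you handle general $g\in G^{0}$ directly), which does not change the argument.
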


\begin{proof}
	By the cocycle property, 
	$$\phi_{t+s, u}(x(u))=\phi_{t, \theta_su}\left(\phi_{s, u}(x(u))\right).$$
	Hence, 
	$$\left\{\phi_{t, \theta_su}\left(\phi_{s, u}(x(u))\right), \;\;t\in\R\right\}\;\;\mbox{ is a bounded orbit}.$$
	In particular, we have that $\phi_{s, u}(x(u))=x(\theta_su)h$ for an unique $h\in G^0$. Furthermore, under the previous identification, it holds that
	$$h=\pi(h)=\pi(h(h^{-1}x(\theta_su)h))=\pi(x(\theta_su)h)=\pi(\phi_{s, u}(x(u)))=\pi(\phi_{s, u}(e))=\phi_{s, u}^0(e),$$
	showing that 
	$$\forall s\in\R, \;\;\;\phi_{s, u}(x(u))=x(\theta_su)\phi_{s, u}^0(e).$$
	Therefore, 
\[
H\left( \Phi_{s}^{0}(u, g)\right) =H\left( \theta_{s}u, \phi_{s, u}%
^{0}(g)\right) =\left( \theta_{s}u, x(\theta_{s}u)\phi_{t, u}^{0}%
(e)\varphi_{s}(g)\right)
\]
\[
=\left( \theta_{s}u, \phi_{s, u}(x(u))\varphi_{s}(g)\right) =\left( \theta
_{s}u, \phi_{s, u}(x(u)g)\right) =\Phi_{s}(H(u, g)),
\]
showing that $H$ conjugates $\Phi_{t}|_{L(\mathcal{C})}$ and $\Phi^{0}_{t}$ as stated.
\end{proof}

\begin{remark}
 In \cite[Theorem 3.4]{Ka1} the author shows that, under a hyperbolicity assumption, $L(\CC)$ is the graph of a continuous function whose domain is the set of control functions $\UC$. In the context of LCS's, such an assumption is equivalent to $G^0=\{e\}$. Therefore our previous result shows that when the system is not necessarily hyperbolic but has compact central manifold (that is $G^0$ is a compact subgroup) we still have a characterization of $L(\mathcal{C})$ in terms of the central manifold and the set $\UC$.
\end{remark}

\subsubsection*{Acknowledgements}

The first author was supported by Proyecto Fondecyt n$^{o}$ 1190142, Conicyt, Chile and the second author was supported by Fapesp grants n$^{o}$ 2020/12971-4 and 2018/13481-0, and partially by CAPES grant no. 309820/2019-7.

\end{document}